\renewcommand\sss[1][n]{\mathfrak{S}_{#1}}
\renewcommand\spe[1]{\rspe{#1}}
\renewcommand\D[1]{\rD{#1}}
\newcommand{\Ext}{\operatorname{Ext}}
\newcommand{\Rad}{\operatorname{Rad}}
\newcommand{\lmod}{{\operatorname{-mod}}}
\renewcommand\phi\varphi
\newcommand{\vn}{\varnothing}
\newcommand{\rt}[1]{\rotatebox{90}{$#1$}}
\definecolor{answercolor}{RGB}{0, 112, 48}
\begin{document}

\title{Wild blocks of type $A$ Hecke algebras are strictly wild}

\author{Liron Speyer
\\\normalsize Okinawa Institute of Science and Technology\\\normalsize Onna-son, Okinawa, Japan 904-0495 \\\texttt{\normalsize liron.speyer@oist.jp}
}

\renewcommand\auth{Liron Speyer}

\runninghead{Wild blocks of type $A$ Hecke algebras are strictly wild}

\msc{20C08, 05E10, 16G10, 16G60}

\toptitle

\begin{abstract}
We prove that all wild blocks of type $A$ Hecke algebras with quantum characteristic $e \geq 3$ -- i.e.~blocks of weight at least $2$ -- are \emph{strictly} wild, with the possible exception of the weight $2$ Rouquier block for $e = 3$.
As a corollary, we show that for $e \geq 3$, all wild blocks of the $q$-Schur algebras are strictly wild, without exception.
\end{abstract}

\section{Introduction}

Let $\bbf$ be an algebraically closed field of characteristic $p\geq 0$, and let $\hhh$ denote the type $A$ Iwahori--Hecke algebra, a deformation of the group algebra $\bbf\sss$ of the symmetric group $\sss$.
Erdmann and Nakano determined the representation type of blocks of $\hhh$ as follows.

\begin{thmc}{enreptype}{Main Result}\label{thm:enrep}
Let $B$ be a block of $\hhh$.
Then
\begin{itemize}
\item
$B$ is simple if and only if it has weight $0$.

\item $B$ has finite representation type if and only if it has weight $\leq 1$.

\item $B$ has tame representation if and only if it has quantum characteristic $e=2$ and weight $2$.

\item $B$ has wild representation type if and only if either (i) $B$ has weight $2$ and the quantum characteristic is $e\geq3$; or (ii) $B$ has weight $w \geq 3$ (for any quantum characteristic).
\end{itemize}
\end{thmc}

Recently, Ariki, Lyle and the author determined the Schurian-finiteness of blocks of $\hhh$ under the assumption that $e\geq 3$~\cite{als23}.
As Schurian modules are also known as bricks, Schurian-finiteness coincides with the notion of brick-finiteness, which is widely treated in the literature.
Our main result was that a block is brick-finite if and only if it has finite representation type.

Recall that an algebra $A$ has wild representation type if there exists a finitely-generated $A{-}\bbf \langle X, Y \rangle$-bimodule $M$ which is free as a right $\bbf \langle X, Y \rangle$-module, such that the functor $M \otimes_{\bbf \langle X, Y \rangle} - : \bbf \langle X, Y \rangle\lmod \rightarrow A\lmod$ preserves indecomposability and isomorphism classes.
We say that $A$ is \emph{strictly wild} if this functor is also full.

The purpose of this paper is to determine which blocks of $\hhh$ that are of wild representation type are in fact \emph{strictly} wild.
Being strictly wild is a nice property -- the fullness of the functor in the definition means that any homomorphism between modules in the image of the functor arises as the image of a homomorphism.
In fact, it is known that if $A$ is a strictly wild algebra, then for any algebra $B$, we have that $B\text{-}\mathrm{mod}$ is a full subcategory of $A\text{-}\mathrm{mod}$.
It also immediately implies brick-wildness, a strengthening of the notion of brick-infinite (or Schurian-infinite).
See \cref{sec:background} for the relevant definitions.
Our main result is the following theorem.

\begin{thm}\label{thm:MAIN}
Suppose $e\geq 3$, and that $B$ is a block of $\hhh$ of weight at least 2.
If $e=3$, suppose further that $B$ is not (Scopes equivalent to) the weight $2$ Rouquier block.
Then $B$ is strictly wild, and therefore brick-wild, in any characteristic.
\end{thm}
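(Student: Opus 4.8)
The plan is to reduce \cref{thm:MAIN} to a purely homological criterion and then to verify that criterion block-by-block. The clean targets for a strict-wildness argument are configurations of bricks (modules $M$ with $\operatorname{End}_B(M)=\bbf$) whose Hom/Ext data realise a known strictly wild algebra as a full exact subcategory, a quotient, or an idempotent subalgebra of $B$; the functor $M\otimes_{\bbf\langle X,Y\rangle}-$ is then built from the corresponding explicit family. I would first isolate this as a lemma in \cref{sec:background}. One subtlety must be respected throughout: since $\hhh$ and each of its blocks is a symmetric algebra, $\dim_\bbf\Ext^1_B(X,Y)=\dim_\bbf\Ext^1_B(Y,X)$ for all modules $X,Y$, so a directed configuration (a generalised Kronecker quiver $\bullet\Rrightarrow\bullet$, which is hereditary and strictly wild) can never arise, and a single brick with a two-dimensional self-extension space is \emph{insufficient} (it may only produce a tame, Kronecker-type deformation). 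The criterion I would use therefore targets either a wild local algebra or the free algebra $\bbf\langle X,Y\rangle$ realised directly through an explicit symmetric family of modules.

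Next I would cut down the number of blocks. Strict wildness is a Morita invariant, and Scopes-equivalent blocks are Morita equivalent, so it suffices to treat one representative per Scopes class; this reduces each weight to a finite, combinatorially controlled family and, via Scopes, pushes each block towards its Rouquier representative. For weight $w\geq 3$ I would then pass down to weight $2$: using the wreath-product-type Morita description of Rouquier blocks, one extracts an idempotent $e$ with $eBe$ Morita equivalent to a weight-$2$ block, and since strict wildness is inherited along the fully faithful exact functor $Be\otimes_{eBe}-$, strict wildness of that weight-$2$ block forces strict wildness of $B$. Crucially, the extra weight gives freedom in this extraction, so that even when $e=3$ one can arrange the truncated weight-$2$ block to be \emph{non-Rouquier}, which is exactly what allows the exception to disappear for $w\geq 3$. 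This reduces the entire theorem to the weight-$2$ case, excluding only the $e=3$ Rouquier block.

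The core of the work is then the weight-$2$ analysis, where I would use the well-understood structure of weight-$2$ blocks (all decomposition numbers are $0$ or $1$, and the submodule structure of the Specht modules $\spe\lambda$ together with the Ext-quiver are governed by the abacus/pyramid combinatorics) to pin down explicit bricks --- typically Specht modules or suitable subquotients, rather than the simples $\D\mu$, so as to break the symmetry of $\operatorname{Hom}$ while respecting that of $\Ext^1$ --- and to verify the required configuration by direct homological computation. The main obstacle will be precisely this explicit verification in the tightest cases (small $e$), and in particular the degeneration of the available configurations for the $e=3$ weight-$2$ Rouquier block: there the natural bricks yield only a symmetric two-by-two, Kronecker-type extension pattern that does not force the free algebra to appear, which is exactly why that block must be set aside as a possible exception. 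Once the weight-$2$ non-Rouquier blocks are handled, brick-wildness follows immediately, since strict wildness implies brick-wildness.
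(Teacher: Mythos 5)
Your outline diverges from the paper at the crucial structural step, and that step contains a genuine gap: the proposed reduction from weight $w\geq 3$ to weight $2$ via idempotent truncation of a wreath-product description of Rouquier blocks does not work. First, Scopes equivalence cannot ``push each block towards its Rouquier representative'': Scopes classes are fixed Morita classes, and the Rouquier block is a single class per weight, so for the (many) non-Rouquier Scopes classes of weight $3$ and higher your wreath-product input is simply unavailable. Second, even for genuine Rouquier blocks, the wreath-product Morita description (Chuang--Kessar/Chuang--Tan/Turner type) requires $w<p$ or $p=0$; it fails exactly in the small characteristics $p=2,3$ that are the delicate cases throughout. Third, even where such a description holds, its natural idempotent truncations land in weight-$2$ \emph{Rouquier} blocks; for $e=3$ that is precisely the excluded block whose strict wildness is open, so the reduction would be circular there rather than, as you claim, giving ``freedom'' to land on a non-Rouquier weight-$2$ block. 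There is also a technical issue you elide: for $Be\otimes_{eBe}-$ to be a (full) representation embedding one needs more than $FG\cong\operatorname{id}$ (e.g.\ exactness requires $Be$ projective as a right $eBe$-module), so inheritance of strict wildness along truncation needs an argument, not just an assertion.

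The paper never performs any Morita-theoretic reduction across weights. Instead it works inside the given block: it shows the Gabriel quiver contains a zigzag-oriented over-extended Dynkin quiver $A_3^{(1)\wedge}$ or $D_4^{(1)\wedge}$, whence strict wildness by \cref{cor:subquiver} (the zigzag path algebra is a \emph{quotient} of the block), and this is certified by exhibiting explicit $5\times 5$ or $6\times 6$ submatrices of graded decomposition matrices that are characteristic-free (\cref{prop:matrixtrick}), using runner removal and the known adjustment matrices in weights $2$ and $3$. Blocks of weight $\geq 4$ are treated by the row-removal identity \cref{thm:rowrem} --- an identity of decomposition \emph{numbers}, not an equivalence of algebras --- which reduces the matrix computation (but not the block) to weight $2$ or $3$; weight $3$ gets its own full case analysis in \cref{sec:wt3}; and the $e=3$ blocks near the Rouquier core in weights $3$, $4$, and $\geq 5$ require separate ad hoc arguments (\cref{prop:Rouqblocks,prop:NearRouqblocks,thm:wlargemain}, using the Jantzen sum formula and explicit adjustment-matrix control), precisely because no generic weight-lowering reduction of the kind you propose exists. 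Your weight-$2$ core (symmetric-algebra $\Ext$-symmetry, hence avoiding directed Kronecker configurations, and hunting for brick configurations) is in the right spirit --- the paper's zigzag orientation is exactly how it respects that symmetry --- but as written your argument proves the theorem for, at best, Rouquier blocks in large characteristic, and leaves all remaining cases untouched.
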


Our method of proving the theorem build on those in \cite{als23}, reducing the problem to determining certain submatrices of graded decomposition matrices -- see \cref{prop:matrixtrick}.
In the case missing in the theorem -- the weight $2$ Rouquier block for $e=3$ -- our method is insufficient for determining that the block is strictly wild.
We invite the expert reader to determine if this block is also strictly wild.

For the $q$-Schur algebra $S_q(n,n)$ as introduced by Dipper and James~\cite{dj89}, we may prove an analogous but stronger result, as we do not need to omit a case as above.

\begin{cor}\label{cor:schuralg}
Suppose $e\geq 3$, and that $B$ is a block of the $q$-Schur algebra $S_q(n,n)$, of weight at least $2$.
Then $B$ is strictly wild, and therefore brick-wild, in any characteristic.
\end{cor}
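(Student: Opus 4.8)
The plan is to reduce the corollary to \cref{thm:MAIN} together with the matrix method of \cref{prop:matrixtrick}, exploiting the classical relationship between the Hecke algebra and the $q$-Schur algebra. Recall that there is an idempotent $e \in S_q(n,n)$ with $e S_q(n,n) e \cong \hhh$, that the Schur functor $\mathsf{f} = e\cdot : S_q(n,n)\lmod \to \hhh\lmod$ is exact, and that it induces a weight-preserving bijection on blocks: a block $B$ of $S_q(n,n)$ of weight $w$ corresponds to a block $\mathsf{f}(B)$ of $\hhh$ of the same weight. Under $\mathsf{f}$ the Weyl (standard) modules $\Delta_\lambda$ are sent to the Specht modules $\spe{\lambda}$, while for $e$-regular $\mu$ the simple $L_\mu$ is sent to the simple $\hhh$-module $\mathsf{f}(L_\mu)$ and for $e$-singular $\mu$ it is killed; consequently the (graded) decomposition numbers $[\Delta_\lambda : L_\mu]$ agree with the Hecke decomposition numbers $[\spe{\lambda}:\mathsf f(L_\mu)]$ whenever $\mu$ is $e$-regular. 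In particular the graded decomposition matrix of $\mathsf{f}(B)$ is exactly the submatrix of that of $B$ obtained by deleting the columns indexed by $e$-singular partitions.

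First I would establish a $q$-Schur analogue of \cref{prop:matrixtrick}: since $S_q(n,n)$ is a graded quasi-hereditary (indeed graded cellular) algebra with the $\Delta_\lambda$ as standard modules, the $\Hom$ and $\Ext^1$ computations underlying \cref{prop:matrixtrick} go through with Weyl modules in place of Specht modules, so that the presence of a suitable submatrix of the graded decomposition matrix already forces a block of $S_q(n,n)$ to be strictly wild. Granting this, for every wild block of $S_q(n,n)$ other than (Scopes classes of) the $e=3$ weight $2$ Rouquier block, the strictly wild configuration produced in the proof of \cref{thm:MAIN} for $\mathsf{f}(B)$ lives entirely in the $e$-regular columns, hence is a submatrix of the decomposition matrix of $B$ as well; applying the $q$-Schur matrix trick to this submatrix shows that $B$ is strictly wild. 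Conceptually this transfer is underpinned by the fact that the left adjoint $S_q(n,n)e \otimes_{\hhh} -$ of $\mathsf{f}$ has invertible unit -- since $\mathsf{f}\bigl(S_q(n,n)e \otimes_{\hhh} N\bigr)\cong N$ -- and is therefore fully faithful, preserving indecomposability and isomorphism classes, so that it carries the strictly wild subcategory for $\mathsf{f}(B)$ into $B\lmod$.

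It remains to treat the $e=3$ weight $2$ Rouquier block, which is precisely the case excluded from \cref{thm:MAIN}. Here the point is that the square $q$-Schur decomposition matrix carries strictly more information than the Hecke decomposition matrix, namely the extra columns indexed by the $e$-singular partitions in the block and the additional simple modules $L_\mu$ they afford. The plan is to exhibit, using the explicitly known weight $2$ decomposition numbers, a submatrix of this larger matrix -- necessarily involving at least one $e$-singular column -- of the shape required by the ($q$-Schur) matrix trick, thereby witnessing strict wildness of the Rouquier block and removing the exception. The main obstacle is exactly this last step: since the Hecke-restricted submatrix was shown in \cref{thm:MAIN} to be insufficient, one must verify that the enlarged matrix genuinely contains the required pattern and check that the hypotheses of the matrix trick (the relevant $\Ext^1$ spaces together with the vanishing of the pertinent Yoneda products) are met by the Weyl and simple modules attached to these $e$-singular partitions.
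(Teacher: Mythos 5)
Your treatment of the non-exceptional blocks is essentially the paper's own argument: the paper also transfers strict wildness from the Hecke block through the idempotent truncation, using the Schur functor $F\colon M\mapsto eM$ and its adjoint $G\colon N\mapsto Be\otimes_{eBe}N$ with $FG(N)\cong N$ to conclude that $G$ is a full representation embedding; and your premise that the matrix trick carries over is also what the paper asserts (``the proof of \cref{prop:matrixtrick} easily applies to $S_q(n,n)$''). So for every block other than the $e=3$ weight $2$ Rouquier block, your proposal is correct and matches the paper.

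The genuine gap is the one you yourself flag as ``the main obstacle'': the $e=3$ weight $2$ Rouquier block, which is precisely the case that makes this corollary stronger than \cref{thm:MAIN}, is left as a plan rather than proved. You assert that one should find a submatrix of the enlarged ($q$-Schur) decomposition matrix involving $e$-singular columns, but you do not exhibit the partitions, verify the shape, or address characteristic-dependence -- and it is not a priori clear that such a configuration exists, since the regular part of the matrix was already shown to be insufficient. The paper completes exactly this step: in the block of $S_q(11,11)$ with core $(3,1^2)$, the five partitions $(6,4,1)$, $(6,3,2)$, $(6,1^5)$, $(3^2,2^2,1)$, $(3^2,2,1^3)$ -- the last of the first three and the last of all five, namely $(6,1^5)$ and $(3^2,2,1^3)$, being $3$-singular -- index a submatrix of the form~(\ref{targetmatrix}), and this is checked to be characteristic-free: for $p\neq 2$ it follows from \cite[Theorem~3.5 and Corollary~3.6]{schrolltan07}, while for $p=2$ the entries in the $3$-regular columns are computed from the corresponding block of $\hhh[11]$ via \cref{thm:wt2adjust}, and the entries involving the two $3$-singular partitions are confirmed by a separate Jantzen sum formula computation. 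Without carrying out this explicit verification (or some substitute for it), your argument proves only the statement of \cref{thm:MAIN} transported to $S_q(n,n)$ with the Rouquier exception intact, not the corollary as stated.
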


The paper is organised as follows.
In \cref{sec:background}, we recall the necessary definitions and results that we will use later.
In \cref{sec:reduction}, we give a reduction result, which distills the proof of \cref{thm:MAIN} down to finding certain submatrices of graded decomposition matrices.
In \cref{sec:wt2}, we prove \cref{thm:MAIN} for weight 2 blocks, while weight 3 blocks are handled in \cref{sec:wt3}.
Finally, the theorem is proved for blocks of weight at least 4 in \cref{sec:highwt}, with much of the work involving reducing down to blocks of weight 2 or 3 by row-removal.
We end with a proof of \cref{cor:schuralg}.

\begin{ack}
The author is partially supported by JSPS Kakenhi grant number 23K03043.
We thank Kaveh Mousavand for originally asking the question which led to this project, and for pointing out that the problem may be attacked via \cref{cor:subquiver}.
We also thank Karin Erdmann for her interest in the project, and for helpful conversations.
We thank Chris Bowman for a question about $q$-Schur algebras, prompting us to include \cref{cor:schuralg}, and Aaron Chan for communicating to us a presentation for the `missing' Rouquier block in the $p\neq2$ case.
Unfortunately, we were unable to determine from the latter whether the block is strictly wild or not and do not include the presentation here.
Finally, we thank the referee for their helpful comments and corrections.
\end{ack}

\section{Background}\label{sec:background}

Throughout, we let $\bbf$ denote an algebraically closed field of characteristic $p\geq 0$.
All our modules are left modules, considered up to isomorphism in the following definitions.

\begin{defn}
For a finite-dimensional algebra $A$ over $\bbf$, we say that $A$ is \emph{wild} if there exists a finitely-generated $A{-}\bbf \langle X, Y \rangle$-bimodule $M$ which is free as a right $\bbf \langle X, Y \rangle$-module, such that the functor $M \otimes_{\bbf \langle X, Y \rangle} - : \bbf \langle X, Y \rangle\lmod \rightarrow A\lmod$ preserves indecomposability and isomorphism classes.
We say that $A$ is \emph{strictly wild} if this functor is also full.
\end{defn}

\begin{rem}
We may equivalently define $A$ to be wild if there exists an exact faithful functor\linebreak $\bbf \langle X, Y \rangle\lmod \rightarrow A\lmod$ which preserves indecomposability and isomorphism classes, and strictly wild if there exists such a functor which is also full.
We note that such functors are often called \emph{representation embeddings} and \emph{full representation embeddings} in the literature.
\end{rem}

\begin{defn}
A module $M$ over an algebra $A$ is called a \emph{brick} (or \emph{Schurian}) if $\End_A(M) \cong \bbf$.

An algebra $A$ is called \emph{brick-finite} (or \emph{Schurian-finite}) if it has finitely many isomorphism classes of bricks, and \emph{brick-infinite} (or \emph{Schurian-infinite}) otherwise.

An algebra $A$ is called \emph{brick-tame} if in each dimension, all but finitely many bricks are in finitely many one-parameter families.
We call $A$ \emph{brick-wild} otherwise.
\end{defn}

More details about strictly wild algebras can be found in \cite[Section~XVIII.4]{simsonskow3}, while details about brick-tameness may be found in \cite{BDbricktame} and \cite[Section~2.3]{CCSchurtame}.
We remark that we will not directly use the definition of brick-tame or brick-wild above, but include it as a refinement of the brick-infinite property shown to hold for blocks of Hecke algebras in \cite{als23}.
In particular, as shown in \cite[Remark 1]{CCSchurtame}, any strictly wild algebra is also brick-wild.
In this paper, we will show that most blocks of Hecke algebras of weight at least 2 are strictly wild, and therefore brick-wild.
This may be seen as a simultaneous strengthening of Erdmann--Nakano's result \cref{thm:enrep}, that blocks of weight at least 2 have wild representation type, and the main result of \cite{als23}, that blocks of weight at least 2 are brick-infinite.

Next, we introduce the type $A$ Hecke algebras.
Let $q\in \bbf^\times$.
The Hecke algebra of the symmetric group, denoted by $\hhh$, is the unital associative $\bbf$-algebra generated by $T_1, T_2, \dots, T_{n-1}$ subject to the relations
\begin{alignat*}3
(T_i - q)(T_i + 1) &= 0 \qquad &&\text{ for } i = 1,\dots, n-1;\\
T_i T_j &= T_j T_i \qquad &&\text{ for } 1\leq i,j \leq n-1 \text{ with } |i-j|>1;\\
T_i T_{i+1} T_i &= T_{i+1} T_i T_{i+1} \qquad &&\text{ for } i = 1,\dots, n-2.
\end{alignat*}

The \emph{quantum characteristic} of $\hhh$ is the smallest positive integer $e$ such that $1 + q + q^2 + \dots + q^{e-1} = 0$, if such an $e$ exists, and we set $e = \infty$ otherwise.

Mathas's book~\cite{mathas} serves as a good introduction to the representation theory of these algebras, and we omit many details here for brevity.
We will assume that the reader is familiar with partitions, beta numbers, and abacus displays.
The reader may refer to \cite[Sections~2.1 and 2.3]{als23} for these standard definitions.
We recall from \cite[Section~2.3]{als23} that the integers $p_0 < p_1 < \dots < p_{e-1}$ for a core partition $\rho$ are defined so that each is the position of the lowest bead on one of the runners of the abacus display for $\rho$.

The algebra $\hhh$ has Specht modules indexed by partitions of $n$ and simple modules indexed by $e$-regular partitions of $n$.
Two simple (or Specht) modules are in the same block if and only if their indexing partitions have the same $e$-cores and weights.
Thus we will denote by $B(\rho,w)$ the block of $\hhh[|\rho|+ew]$ with core $\rho$ and weight $w$.
More details may be found in \cite[Section~2.2]{als23}.

It is also known that the algebras $\hhh$ may be graded, via the Brundan--Kleshchev isomorphism between blocks of $\hhh$ and cyclotomic KLR algebras.

Let $\la, \mu$ be partitions of $n$, with $\mu$ $e$-regular.
The corresponding \emph{decomposition number} is the multiplicity $d_{\la\mu}^{e,p}(1) = [\spe\la : \D\mu]$ of $\D\mu$ in $\spe\la$.
For a graded $\hhh$-module $D$, we let $D\langle d \rangle$ denote the graded shift (by $d$) of the module $D$ -- in other words $D\langle d \rangle_r = D_{r-d}$.
Then the corresponding \emph{graded} decomposition number is the Laurent polynomial
\[
d_{\la\mu}^{e,p}(v) = [\spe\la : \D\mu]_v = \sum_{d\in\bbz} [\spe\la : \D\mu \langle d \rangle] v^d \in \bbn[v,v^{-1}].
\]
It is known that $d_{\la\la}^{e,p}(v) = 1$ and $d_{\la\mu}^{e,p}(v) \neq 0$ only if $\la \domby \mu$.

By~\cite[Theorem~5.17]{bk09}, the characteristic $p$ graded decomposition matrix is obtained from the characteristic $0$ one by post-multiplication by the lower-unitriangular adjustment matrix, whose entries $a_{\la\mu}(v)$ -- indexed by $\la$ and $\mu$ both $e$-regular partitions of $n$ -- are Laurent polynomials with nonnegative integral coefficients, symmetric in $v, v^{-1}$.
In other words,
\[
d_{\la\mu}^{e,p}(v) = d_{\la\mu}^{e,0}(v) + \sum_{\nu \domsby \mu} d_{\la\nu}^{e,0}(v) a_{\nu\mu}(v).
\]

In computing graded decomposition numbers, we will use several reduction techniques.
The first is (graded) Scopes equivalence.
The reader may find a detailed account of this in \cite[Sections~2.7~and~3]{als23}.
Here we only briefly recount the necessary notions.

Let $B=B(\rho,w)$ be a block of $\hhh$, and look at the abacus display for $\rho$.
Suppose that runner $i$ (counting from the left) has $k$ more beads than runner $i-1$ for some $k\geq w$.
Then if we let $A = B(\Phi(\rho),w)$ denote the block of $\hhh[n-k]$ whose core is obtained from $\rho$ by swapping the runners in the abacus display.
Then defining the map $\Phi$ analogously for all partitions in $B$, we get that $\Phi$ is a bijection from partitions in $B$ to partitions in $A$, sending $e$-regular partitions to $e$-regular partitions.
Then the blocks $B$ and $A$ are (graded) Morita equivalent, and if $\la$ and $\mu$ are partitions of $n$, with $\mu$ $e$-regular, then for any $p\geq 0$, $d_{\la\mu}^{e,p}(v) = d_{\Phi(\la)\Phi(\mu)}^{e,p}(v)$.

Our next result is a runner removal theorem of James and Mathas, though we use a useful reformulation of the statement due to Fayers~\cite[Theorem~2.15]{fay07wt4}.

\begin{thmc}{jm02}{Theorem 3.2}\label{thm:runnerrem}
Suppose $e\geq 3$ and that $\la$ and $\mu$ are partitions of $n$, $\mu$ is $e$-regular, and that we take abacus displays for $\la$ and $\mu$.
Suppose that for some $i$, the last bead on runner $i$ occurs before every unoccupied space on both abacus displays, and define two abacus displays with $e-1$ runners by deleting runner $i$ from the abacus displays of $\la$ and $\mu$.
Let $\la^-$ and $\mu^-$ be the partitions defined by these displays.
If $\mu^-$ is $(e-1)$-regular, then
\[
d^{e,0}_{\la\mu}(v) = d^{e-1,0}_{\la^- \mu^-}(v).
\]
\end{thmc}

The following row-removal theorem will be used in \cref{sec:highwt} for determining that blocks of weight at least 4 are strictly wild.
The theorem combines \cite[Theorem~1]{j81} and a special case of \cite[Theorem 1]{cmt02}.

\begin{thm}\label{thm:rowrem}
Let $\la$ and $\mu$ be two partitions of $n$ with $\la_1=\mu_1$.
Define $\bar\la$ and $\bar\mu$ to be the partitions of $n-\la_1$ obtained from $\la$ and $\mu$ by removing the first row.
Then $d_{\la\mu}^{e,0}(v) = d_{\bar\la\bar\mu}^{e,0}(v)$ and $d_{\la\mu}^{e,p}(1) = d_{\bar\la\bar\mu}^{e,p}(1)$.
\end{thm}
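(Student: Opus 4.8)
The plan is to assemble the statement from two existing results, one supplying the graded equality in characteristic $0$ and the other the ungraded equality in arbitrary characteristic; the hypothesis $\la_1 = \mu_1$ is precisely the classical ``equal top row'' condition for row removal, so no new combinatorics is needed beyond checking that both inputs apply.

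First I would treat the ungraded equality $d_{\la\mu}^{e,p}(1) = d_{\bar\la\bar\mu}^{e,p}(1)$. This is James's row-removal theorem for decomposition numbers, \cite[Theorem~1]{j81}: when the first rows of $\la$ and $\mu$ agree, deleting them leaves the decomposition number unchanged. James states this for symmetric groups, so the one point to verify is that the argument carries over to the Hecke algebra $\hhh$ for all $q$ (equivalently all quantum characteristics $e$) and all characteristics $p$ -- this is the standard extension, using the compatibility of Specht modules with the natural embedding of $\hhh[n-\la_1]$ into $\hhh$. I would also record the routine observation that if $\mu$ is $e$-regular then so is $\bar\mu$: removing the single largest part lowers only the multiplicity of the value $\mu_1$, which stays below $e$, so $d_{\bar\la\bar\mu}^{e,p}$ is genuinely a decomposition number.

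For the graded equality $d_{\la\mu}^{e,0}(v) = d_{\bar\la\bar\mu}^{e,0}(v)$ I would pass to the canonical-basis description. Via the Brundan--Kleshchev grading and Ariki's theorem, the polynomials $d_{\la\mu}^{e,0}(v)$ are the coefficients of the lower global basis of the level-one $q$-deformed Fock space. Chuang, Miyachi and Tan prove that these $v$-decomposition numbers are invariant under row and column removal \cite[Theorem~1]{cmt02}, and removing one equal top row is exactly the special case we need. The work here is bookkeeping: matching their degree normalisation and dominance conventions to ours, and confirming that our ``delete the first row'' is the stated specialisation of their (more general) removal statement, after which the graded identity is immediate.

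The main obstacle is not depth but reconciliation, and it also explains the asymmetry of the statement. There is no available graded row-removal in positive characteristic: the characteristic-$p$ graded matrix is obtained from the characteristic-$0$ one by the adjustment matrix (as in \cite[Theorem~5.17]{bk09}), and the adjustment entries $a_{\nu\mu}(v)$ carry no known compatibility with row removal, so the characteristic-$0$ graded identity cannot be transported to characteristic $p$ at the graded level. Evaluating at $v = 1$ circumvents this precisely because James's theorem is ungraded and already valid in every characteristic. Thus the content of the proof is to confirm that the single hypothesis $\la_1 = \mu_1$ activates both \cite[Theorem~1]{j81} and \cite[Theorem~1]{cmt02}, and to align their conventions with the graded set-up recalled above.
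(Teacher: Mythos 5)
Your proposal matches the paper exactly: the theorem is stated there as a combination of James's row-removal theorem \cite[Theorem~1]{j81} for the ungraded equality in arbitrary characteristic and a special case of \cite[Theorem~1]{cmt02} for the graded equality in characteristic $0$, which is precisely the two-source assembly you describe. Your additional remarks on the $e$-regularity of $\bar\mu$ and on why no graded positive-characteristic version is available are sensible bookkeeping but not needed beyond what the cited results already supply.
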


\section{Reduction theorem}\label{sec:reduction}

The following lemma is the key first step in strengthening the results of \cite{als23}.

\begin{lemc}{simsonskow3}{Chapter XVIII, Corollary 4.7}\label{lem:ssstrictly}
Let $A$ be the path algebra $A=\bbf Q$ of a finite, acyclic, connected quiver $Q$.
Then $A$ is strictly wild if and only if $Q$ is not of finite or affine type $ADE$.
\end{lemc}

For us, it will suffice to apply the above lemma in the case where $Q$ is an `over-extended Dynkin diagram' of the form $A_3^{(1)\wedge}$ or $D_4^{(1)\wedge}$.
These are obtained from the affine Dynkin diagrams $A_3^{(1)}$ and $D_4^{(1)}$, respectively, by adding an extra node and connecting it to any existing node besides the quadrivalent node of $D_4^{(1)}$ by an edge.
More generally, over-extended Dynkin diagrams appear in the literature as the Dynkin diagrams for hyperbolic Kac--Moody algebras.
See~\cite{hyperbolicDynkin} for a classification of such diagrams, from which we take our notation.

\begin{cor}\label{cor:subquiver}
If the Gabriel quiver of a finite-dimensional algebra $A$ over $\bbf$ contains the quiver of an over-extended Dynkin diagram with zigzag orientation (i.e.~such that every vertex is a sink or a source) as a
subquiver, then A is strictly wild, and therefore brick-wild.
\end{cor}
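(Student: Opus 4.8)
The goal is to deduce Corollary~\ref{cor:subquiver} from Lemma~\ref{lem:ssstrictly} by a subquiver argument. The plan is to let $Q'$ denote the over-extended Dynkin quiver (with its zigzag orientation) sitting inside the Gabriel quiver of $A$, and to produce, from this embedding, a full representation embedding $\bbf Q' \lmod \to A \lmod$. Since $Q'$ is a connected, acyclic quiver that is by construction neither of finite nor of affine type $ADE$ (an over-extended diagram strictly contains an affine diagram), Lemma~\ref{lem:ssstrictly} gives that $\bbf Q'$ is strictly wild. Composing the two full representation embeddings $\bbf\langle X,Y\rangle \lmod \to \bbf Q' \lmod \to A \lmod$ then exhibits $A$ as strictly wild, and the final clause (brick-wild) is immediate from the cited fact in \cite[Remark 1]{CCSchurtame} that any strictly wild algebra is brick-wild.

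The technical heart is therefore the passage from ``$Q'$ is a subquiver of the Gabriel quiver of $A$'' to ``$\bbf Q' \lmod$ is a full subcategory of $A \lmod$ via a full representation embedding.'' First I would recall that the Gabriel quiver of $A$ records $\dim \Ext^1_A(S,S')$ between simple modules as the number of arrows, so each vertex of $Q'$ corresponds to a simple $A$-module $S_v$ and each arrow $v \to w$ to a chosen basis element of $\Ext^1_A(S_w, S_v)$ (or the appropriate direction consistent with the convention). The key construction is to build, for each $\bbf Q'$-representation, an $A$-module whose radical series realises exactly the chosen extensions. The zigzag orientation is precisely what makes this clean: because every vertex of $Q'$ is a sink or a source, any representation of $Q'$ is supported in two Loewy layers, so the desired $A$-module can be assembled as an extension of a semisimple top by a semisimple socle, with the gluing maps read off from the $\Ext^1$ classes. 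This realises $\bbf Q'$ as (a quotient of, hence full subcategory via) a suitable subalgebra or idempotent-truncation construction on $A$; one standard route is to exhibit the functor explicitly and check it is exact, faithful, preserves indecomposability and isomorphism classes, and is full.

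The main obstacle I anticipate is verifying \emph{fullness} of the embedding, rather than merely that it is a representation embedding. Preservation of indecomposability and isomorphism classes is comparatively soft, but fullness requires that every $A$-homomorphism between two modules in the image already comes from a $\bbf Q'$-homomorphism; this is exactly the point where the zigzag (two-layer) hypothesis does the real work, since it forces any $A$-module homomorphism to respect the socle/top decomposition and hence to be determined by its action on the simple constituents, matching the $\bbf Q'$-morphism spaces. I would therefore spend most of the effort checking that $\Hom_A$ between the constructed modules agrees dimensionwise with $\Hom_{\bbf Q'}$ of the originals, using that the arrows of $Q'$ were chosen to be linearly independent in the relevant $\Ext^1$ spaces so that no ``extra'' homomorphisms can appear. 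Once fullness is in hand, the composition with the strict wildness of $\bbf Q'$ from Lemma~\ref{lem:ssstrictly} closes the argument.
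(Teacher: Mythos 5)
Your overall architecture matches the paper's: let $Q'$ be the zigzag over-extended Dynkin quiver sitting inside the Gabriel quiver of $A$, invoke \cref{lem:ssstrictly} to see that $\bbf Q'$ is strictly wild, and compose full representation embeddings $\bbf\langle X,Y\rangle\lmod \to \bbf Q'\lmod \to A\lmod$. The gap is in the step you yourself flag as the ``technical heart'': you never actually establish the embedding $\bbf Q'\lmod \to A\lmod$, and the route you sketch for it is both harder than necessary and rests on a false claim. A homomorphism between modules concentrated in two Loewy layers is \emph{not} determined by its action on the simple constituents, nor does it ``respect the socle/top decomposition'' in any way that pins it down (already for the quiver $1\to 2$ there is a nonzero map from the projective cover of $S_1$ onto $S_1$ that kills the socle). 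So your proposed comparison of $\operatorname{Hom}_A$ with $\operatorname{Hom}_{\bbf Q'}$ will not fall out of that heuristic, and an ad hoc iterated-extension construction of the image modules leaves you with genuine functoriality and fullness verifications that the proposal does not carry out.

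The paper's proof makes all of this evaporate. Because every vertex of $Q'$ is a sink or a source, $\bbf Q'$ has radical square zero, so $\bbf Q'$ is isomorphic to the quotient of $A$ by the ideal generated by $\Rad^2 A$ together with the idempotents and arrows lying outside $Q'$. Inflation along the surjection $A \twoheadrightarrow \bbf Q'$ is then automatically an exact, fully faithful functor preserving indecomposability and isomorphism classes --- fullness is free, since any $A$-linear map between modules annihilated by the kernel is $\bbf Q'$-linear. You mention this quotient construction parenthetically but then set it aside in favour of the explicit module-by-module construction; committing to it is exactly what closes the obstacle you anticipate. (Your reduction to \cref{lem:ssstrictly}, including the observation that an over-extended diagram strictly contains an affine one and so is not of finite or affine type $ADE$, and the final appeal to strictly wild implying brick-wild, are fine.)
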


\begin{proof}
Let $B$ denote the $\bbf$-algebra whose quiver is the over-extended Dynkin diagram with zigzag orientation.
Then $B$ is strictly wild by \cref{lem:ssstrictly}.
Moreover, $B$ is isomorphic to a quotient of $A$, so that there is a surjective homomorphism of algebras $A \rightarrow B$, and therefore there is a fully faithful exact functor $B\text{-}\mathrm{mod} \rightarrow A\text{-}\mathrm{mod}$ preserving indecomposability and isomorphism classes.
Precomposing with a fully faithful exact functor $\bbf \langle X, Y \rangle\lmod \rightarrow B\lmod$ which preserves indecomposability and isomorphism classes yields such a functor $\bbf \langle X, Y \rangle\lmod \rightarrow A\lmod$.
\end{proof}

\begin{prop}\label{prop:matrixtrick}
Suppose that $e\geq3$ and $\bbf$ has characteristic $p\geq 0$.
If a submatrix of the graded decomposition matrix in characteristic $0$ is of one of the following forms, and $d_{\la\mu}^{e,p}(1) = d_{\la\mu}^{e,0}(1)$ for all $e$-regular partitions $\la, \mu$ that label rows of the submatrix, then the block in which those partitions belong is strictly wild, and therefore brick-wild.
\begin{multicols}{2}
\begin{equation*}\label{targetmatrix}\tag{\(\dag\)}
\begin{pmatrix}
1&\cdot&\cdot&\cdot&\cdot\\
v&1&\cdot&\cdot&\cdot\\
*&v&1&\cdot&\cdot\\
*&v&*&1&\cdot\\
*&*&v&v&1
\end{pmatrix}
\end{equation*}
\begin{equation*}\label{targetmatrix1.5}\tag{\(\dag'\)}
\begin{pmatrix}
1&\cdot&\cdot&\cdot&\cdot\\
*&1&\cdot&\cdot&\cdot\\
v&v&1&\cdot&\cdot\\
*&v&*&1&\cdot\\
*&*&v&v&1
\end{pmatrix}
\end{equation*}
\begin{equation*}\label{targetmatrix2}\tag{\(\dag''\)}
\begin{pmatrix}
1&\cdot&\cdot&\cdot&\cdot\\
*&1&\cdot&\cdot&\cdot\\
*&v&1&\cdot&\cdot\\
v&*&v&1&\cdot\\
v&*&v&*&1
\end{pmatrix}
\end{equation*}
\begin{equation*}\label{targetmatrix3}\tag{\(\ddag\)}
\begin{pmatrix}
1&\cdot&\cdot&\cdot&\cdot\\
*&1&\cdot&\cdot&\cdot\\
v&v&1&\cdot&\cdot\\
*&*&v&1&\cdot\\
v&*&*&v&1
\end{pmatrix}
\end{equation*}
\begin{equation*}\label{targetmatrixalt}\tag{\(\ddag'\)}
\begin{pmatrix}
1&\cdot&\cdot&\cdot&\cdot\\
v&1&\cdot&\cdot&\cdot\\
*&v&1&\cdot&\cdot\\
v&*&v&1&\cdot\\
*&*&*&v&1
\end{pmatrix}
\end{equation*}
\begin{equation*}\label{targetmatrixalt2}\tag{\(\ddag''\)}
\begin{pmatrix}
1&\cdot&\cdot&\cdot&\cdot\\
v&1&\cdot&\cdot&\cdot\\
*&v&1&\cdot&\cdot\\
*&v&*&1&\cdot\\
v&*&*&v&1
\end{pmatrix}
\end{equation*}
\begin{equation*}\label{targetmatrixaltsquare}\tag{\(\clubsuit\)}
\begin{pmatrix}
1&\cdot&\cdot&\cdot&\cdot\\
*&1&\cdot&\cdot&\cdot\\
v&v&1&\cdot&\cdot\\
*&*&v&1&\cdot\\
*&v&*&v&1
\end{pmatrix}
\end{equation*}
\begin{equation*}\label{targetmatrixaltsquare1.5}\tag{\(\clubsuit'\)}
\begin{pmatrix}
1&\cdot&\cdot&\cdot&\cdot\\
v&1&\cdot&\cdot&\cdot\\
*&v&1&\cdot&\cdot\\
v&*&v&1&\cdot\\
*&*&v&*&1
\end{pmatrix}
\end{equation*}
\begin{equation*}\label{targetmatrixaltsquare2}\tag{\(\clubsuit''\)}
\begin{pmatrix}
1&\cdot&\cdot&\cdot&\cdot\\
v&1&\cdot&\cdot&\cdot\\
*&v&1&\cdot&\cdot\\
*&*&v&1&\cdot\\
*&v&*&v&1
\end{pmatrix}
\end{equation*}
\begin{equation*}\label{targetmatrixstar}\tag{\(\spadesuit\)}
\begin{pmatrix}
1&\cdot&\cdot&\cdot&\cdot\\
\cdot&1&\cdot&\cdot&\cdot\\
v&v&1&\cdot&\cdot\\
\cdot&v^2&v&1&\cdot\\
v^2&\cdot&v&\cdot&1\\
\cdot&\cdot&v^2&\cdot&v&1
\end{pmatrix}
\end{equation*}
\end{multicols}
\end{prop}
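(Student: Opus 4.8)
The plan is to invoke \cref{cor:subquiver}: it suffices to exhibit an over-extended Dynkin diagram, with \emph{some} zigzag orientation, inside the Gabriel quiver of the block $B$. The crucial simplification is that $B$ is a symmetric algebra, so $\dim\Ext^1_B(\D\la,\D\mu)=\dim\Ext^1_B(\D\mu,\D\la)$ for all $e$-regular $\la,\mu$ in $B$. The Gabriel quiver is therefore \emph{doubled}: whenever an edge is present, arrows run in both directions. Hence, as soon as the underlying graph of the relevant part of the quiver contains the graph of an over-extended Dynkin diagram, we may orient each of those edges as we please and so realise the zigzag orientation as a subquiver. The problem thus reduces to deciding which $\Ext^1$-spaces between simples are nonzero, and reading off the resulting graph.

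The bridge to the decomposition matrix is the principle that a $v$-entry produces an arrow: if the coefficient of $v^1$ in $d_{\la\mu}(v)$ is nonzero, then $\Ext^1_B(\D\la,\D\mu)\neq0$, since $\D\mu$ then occurs in the degree-$1$ layer of the graded Specht module $\spe\la$, whose head is $\D\la$. To deduce the characteristic-$p$ quiver from the characteristic-$0$ matrices in the statement, I would use the hypothesis $d^{e,p}_{\la\mu}(1)=d^{e,0}_{\la\mu}(1)$ together with the adjustment formula, which writes the difference $d^{e,p}_{\la\mu}(v)-d^{e,0}_{\la\mu}(v)=\sum_{\nu\domsby\mu}d^{e,0}_{\la\nu}(v)\,a_{\nu\mu}(v)$ as a Laurent polynomial with nonnegative coefficients; such a polynomial vanishing at $v=1$ is identically zero. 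Hence $d^{e,p}_{\la\mu}(v)=d^{e,0}_{\la\mu}(v)$ throughout the submatrix, so the $v$-entries, and thus the arrows they produce, are the same in characteristic $p$ as in characteristic $0$.

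It then remains to verify the shape of the graph in each case. For every one of the nine $5\times5$ matrices the five $v$-entries give five edges on the five vertices, and in each case these form a $4$-cycle with one pendant edge attached, that is, the over-extended diagram $A_3^{(1)\wedge}$. For the $6\times6$ matrix $(\spadesuit)$ the five $v$-entries make the third vertex quadrivalent, joined to $1,2,4,5$, with a single further edge $5$--$6$, giving $D_4^{(1)}$ with one leg extended, namely $D_4^{(1)\wedge}$; the three $v^2$-entries sit exactly at the length-two paths $4$--$3$--$2$, $5$--$3$--$1$ and $6$--$5$--$3$ of this graph, so they are explained by compositions of arrows and introduce no new edges. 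In every case the doubled Gabriel quiver contains the displayed over-extended diagram, which we orient as a zigzag, and \cref{cor:subquiver} completes the argument.

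The one structural input, and the step I expect to carry all the content, is the passage from a $v$-entry to a genuine nonzero $\Ext^1$; the positive grading supplies it cleanly. As $B$ is positively graded with $B_0$ semisimple we have $\Rad B=B_{>0}$, so the cyclic, degree-$0$-generated module satisfies $\Rad\spe\la=(\spe\la)_{\geq1}$ and $\Rad^2\spe\la\subseteq(\spe\la)_{\geq2}$. Consequently the degree-$1$ part $(\spe\la)_1$ injects into $\Rad\spe\la/\Rad^2\spe\la$, so a composition factor $\D\mu$ appearing in degree $1$ lies in the second radical layer beneath the head $\D\la$ and yields a nonzero class in $\Ext^1_B(\D\la,\D\mu)$, in any characteristic. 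The subtlety to guard against is precisely that a degree-$1$ factor might be pushed deeper into the radical, but the inclusion $\Rad^2\spe\la\subseteq(\spe\la)_{\geq2}$ rules this out; once it is granted, the remaining work is the routine reading of the nine edge-patterns and the single $D_4^{(1)\wedge}$ pattern above.
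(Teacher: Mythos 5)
Your proposal follows essentially the same route as the paper, which simply points to \cite[Proposition~2.15]{als23} together with \cref{cor:subquiver}: use the symmetry of the block to double the quiver, convert degree-one entries of the graded decomposition matrix into arrows, transfer from characteristic $0$ to characteristic $p$ via the non-negativity of the adjustment-matrix correction evaluated at $v=1$, and read off $A_3^{(1)\wedge}$ (resp.\ $D_4^{(1)\wedge}$ for ($\spadesuit$)) from the five degree-one entries. The characteristic-transfer argument and the identification of the ten edge-patterns are correct as you state them, and extra arrows beyond the certified ones are harmless since only a subquiver is required.

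The one point that needs repair is your justification of the key step, namely that a nonzero coefficient of $v$ in $d_{\la\mu}(v)$ forces $\Ext^1(\D\la,\D\mu)\neq 0$. You assert that $B$ is positively graded with $B_0$ semisimple, so that $\Rad\spe\la=(\spe\la)_{\geq1}$ and $\Rad^2\spe\la\subseteq(\spe\la)_{\geq2}$. Taken literally this is false: in the KLR presentation the generators $\psi_r e(\mathbf{i})$ with $i_r=i_{r+1}$ have degree $-2$, and indeed $\dim_v\spe\la$ generally has terms in negative degrees, so $B$ itself is not non-negatively graded and $\spe\la$ is not supported in non-negative degrees. What is true — and is the actual content of the adjacency lemma in \cite{als23} that the paper's proof leans on — is that the \emph{basic} algebra of $B$ is positively graded: by the graded cellular structure its graded Cartan matrix is $D(v)^{\mathrm{tr}}D(v)$, and the positivity $d_{\la\mu}(v)\in\delta_{\la\mu}+v\bbn[v]$ from \cite{bk09} then forces all Hom-spaces between the indecomposable projectives to be concentrated in non-negative degrees with only the identity components in degree $0$. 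Running your radical-versus-grading comparison over the basic algebra (where the image of $\spe\la$ is generated in degree $0$ and has one-dimensional degree-$0$ part) recovers exactly the inclusion $\Rad^2\subseteq(\cdot)_{\geq2}$ you need, and hence the arrow. So the step is correct, but it is not free: it rests on the positivity of graded decomposition numbers rather than on any grading property of $B$ itself, and your write-up should route through the basic algebra to make it sound.
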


\begin{proof}
Both the statement and its proof are almost identical to \cite[Proposition~2.15]{als23}, with an extra vertex in the underlying subquiver of the Gabriel quiver, yielding $A_3^{(1)\wedge}$ or $D_4^{(1)\wedge}$, and applying \cref{cor:subquiver}.
\end{proof}

\section{Weight 2 blocks}\label{sec:wt2}

In this section, we will prove \cref{thm:MAIN} for weight 2 blocks.
In particular, for $e\geq 3$, we show that we can always find $A_3^{(1)\wedge}$ (i.e.~a square with an extra edge from one corner to a fifth vertex) as a subquiver of the Gabriel quiver of any weight 2 block, except for the (Scopes class of the) Rouquier block when $e=3$.
We say more about this block at the end of \cref{subsec:wt2fifthcase}.

First, we recall the necessary results on decomposition matrices and adjustment matrices.
Weight 2 blocks have trivial adjustment matrices for $p\geq 3$, by \cite{richardswt2}, so that our main focus is on $p=0$ or $2$.
For weight 2 blocks, we use the following notation for the partitions in the block.
First, we number the runners from $0$  to $e-1$, so that runner $i$ contains the marked position $p_i$, for $p_0 < p_1 < \dots < p_{e-1}$ as in \cref{sec:background}.
If the abacus display for $\la$ is obtained from that of its $e$-core by sliding the lowest beads on runners $i$ and $j$ each down one spot, with $i<j$, we denote the partition by $\langle i, j \rangle$.
If it is obtained by sliding the lowest bead on runner $i$ down two spaces, we denote it by $\langle i \rangle$.
Finally, if it is obtained by sliding each of the bottom two beads on runner $i$ down one space, we denote it $\langle i^2 \rangle$.

\begin{thmc}{faywt2}{Corollary 2.4}\label{thm:wt2adjust}
Let $\nu$ and $\mu$ be $e$-regular partitions in a weight $2$ block of $\hhh$, and let $p=2$.
Then
\[
a_{\nu\mu}(v) = 
\begin{cases}
1 &\text{ if } \nu = \langle i^2 \rangle, \ \mu =\langle i\rangle, \ p_i-p_{i-1}>e \text{ and } p_{i+1}-p_i>e \text{ for } 1\leq i \leq e-1;\\
1 &\text{ if } \nu = \langle i^2 \rangle, \ \mu =\langle i, i+1\rangle, \ p_i-p_{i-1}>e \text{ and } p_{i+1}-p_i<e \text{ for } 1\leq i < e-1;\\
\delta_{\nu\mu} &\text{ otherwise.}\\
\end{cases}
\]
\end{thmc}

As in \cite{als23}, we say that a partition $\la$ is \emph{adjacent} to an $e$-regular partition $\mu$ if $d_{\la\mu}^{e,0}(v) = v$.

\begin{thmc}{faywt2}{Theorem 3.2}\label{thm:wt2ext}
Suppose $p=2$, $B$ is a weight two block and $\la$, $\mu$ are $e$-regular partitions in $B$.
Then:
\begin{itemize}
\item If neither of $\la$ and $\mu$ is of the form $\langle i^2 \rangle$ for some $i$ such that $p_i - p_{i-1} > e$,
then $\Ext^1(\D\la, \D\mu) \cong \bbf$ if $\la$ and $\mu$ are adjacent, and is trivial otherwise.

\item Suppose $\la = \langle i^2 \rangle$ for some $i$ such that $p_i - p_{i-1} >e$.
Then $\Ext^1(\D\la, \D\mu) \cong \bbf$ if $\mu = \langle i \rangle$ with 
$p_{i+1} - p_i > e$
or if $\mu = \langle i, i+1 \rangle$ with 
$p_{i+1} - p_i < e$,
 and is trivial otherwise.
\end{itemize}
\end{thmc}

\subsection{$p_{e-1}-p_{e-3}<e$}\label{subsec:wt2firstcase}

This case was treated in \cite[Section~4.1]{als23}, and we used the partitions $\langle e-1 \rangle$, $\langle e-2 \rangle$, $\langle e-3 \rangle$, $\langle e-3,e-1 \rangle$ to yield our result there (obtaining a copy of $A^{(1)}_3$ in the Gabriel quiver).
Now, if we furthermore include $\langle e-2,e-1 \rangle$, and note that \cref{thm:runnerrem} allows us to restrict the problem of finding characteristic $0$ graded decomposition numbers down to $n=6$, as in loc.~cit., where $\langle e-2,e-1 \rangle = (3^2)$, and we find that the submatrix of the graded decomposition matrix given by these 5 partitions is of the form~(\ref{targetmatrix}).
\begin{answer}
\[
\begin{array}{r|ccccc}
&\rt{6}
&\rt{5,1}
&\rt{4,1^2}
&\rt{3^2}
&\rt{\mathrlap{3,2,1}\hphantom{\langle e-3,e-1 \rangle}}
\\\hline
6&1&\cdot&\cdot&\cdot&\cdot\\
5,1&v&1&\cdot&\cdot&\cdot\\
4,1^2&\cdot&v&1&\cdot&\cdot\\
3^2&\cdot&v&\cdot&1&\cdot\\
3,2,1&v&v^2&v&v&1
\end{array}
\qquad = \qquad
\begin{array}{r|ccccc}
&\rt{\langle e-1 \rangle}
&\rt{\langle e-2 \rangle}
&\rt{\langle e-3 \rangle}
&\rt{\langle e-2,e-1 \rangle}
&\rt{\langle e-3,e-1 \rangle}
\\\hline
\langle e-1 \rangle&1&\cdot&\cdot&\cdot&\cdot\\
\langle e-2 \rangle&v&1&\cdot&\cdot&\cdot\\
\langle e-3 \rangle&\cdot&v&1&\cdot&\cdot\\
\langle e-2,e-1 \rangle&\cdot&v&\cdot&1&\cdot\\
\langle e-3,e-1 \rangle&v&v^2&v&v&1
\end{array}
\]
\end{answer}
By \cref{thm:wt2adjust}, we see that this matrix is characteristic-free, and we thus obtain that the Gabriel quiver for any block with $p_{e-1}-p_{e-3}<e$ contains a copy of $A_3^{(1)\wedge}$, and by \cref{prop:matrixtrick} these blocks are therefore strictly wild and brick-wild.

\subsection{$p_{e-1}-p_{e-2}<e$ and $p_{e-2}-p_{e-3}<e$, but $p_{e-1}-p_{e-3}>e$}\label{subsec:wt2secondcase}

This case was treated in \cite[Section~4.2]{als23}, and we used the partitions $\langle e-1 \rangle$, $\langle e-2 \rangle$, $\langle e-2,e-1 \rangle$, $\langle (e-1)^2 \rangle$ to obtain a copy of $A^{(1)}_3$ in the Gabriel quiver.
Similarly to \cref{subsec:wt2firstcase}, we include the additional partition $\langle e-3 \rangle$, and we use \cref{thm:runnerrem} to reduce the problem to the corresponding $n=7$ block, where we obtain that the graded decomposition matrix is of the form~(\ref{targetmatrix2}).
\begin{answer}
\[
\begin{array}{r|ccccc}
&\rt{7}
&\rt{5,2}
&\rt{4,3}
&\rt{4,2,1}
&\rt{\mathrlap{3,2,1^2}\hphantom{\langle e-2,e-1 \rangle}}
\\\hline
7&1&\cdot&\cdot&\cdot&\cdot\\
5,2&v&1&\cdot&\cdot&\cdot\\
4,3&\cdot&v&1&\cdot&\cdot\\
4,2,1&v&v^2&v&1&\cdot\\
3,2,1^2&v&\cdot&v&v^2&1
\end{array}
\qquad = \qquad
\begin{array}{r|ccccc}
&\rt{\langle e-1 \rangle}
&\rt{\langle e-2 \rangle}
&\rt{\langle e-2,e-1 \rangle}
&\rt{\langle (e-1)^2 \rangle}
&\rt{\langle e-3 \rangle}
\\\hline
\langle e-1 \rangle&1&\cdot&\cdot&\cdot&\cdot\\
\langle e-2 \rangle&v&1&\cdot&\cdot&\cdot\\
\langle e-2,e-1 \rangle&\cdot&v&1&\cdot&\cdot\\
\langle (e-1)^2 \rangle&v&v^2&v&1&\cdot\\
\langle e-3 \rangle&v&\cdot&v&v^2&1
\end{array}
\]
\end{answer}
This matrix is again characteristic-free, by \cref{thm:wt2adjust}.
By \cref{prop:matrixtrick}, any block with $p_{e-1}-p_{e-2}<e$ and $p_{e-2}-p_{e-3}<e$, but $p_{e-1}-p_{e-3}>e$ is therefore strictly wild and brick-wild.

\subsection{$p_{e-1} - p_{e-2} > e$ and $p_{e-2} - p_{e-3} < e$}\label{subsec:wt2thirdcase}

This case was treated in \cite[Section~4.3]{als23}, and we used the partitions $\langle e-1 \rangle$, $\langle e-2,e-1 \rangle$, $\langle e-2 \rangle$, $\langle e-3 \rangle$ to obtain a copy of $A^{(1)}_3$ in the Gabriel quiver.
We include the additional partition $\langle (e-1)^2 \rangle$, and again use \cref{thm:runnerrem} to reduce the problem to the corresponding $n=8$ block, where we obtain that the graded decomposition matrix in characteristic $0$ is of the form~(\ref{targetmatrix3}).
\begin{answer}
\[
\begin{array}{r|ccccc}
&\rt{8}
&\rt{5,3}
&\rt{5,2,1}
&\rt{4,3,1}
&\rt{\mathrlap{3^2,1^2}\hphantom{\langle e-2,e-1 \rangle}}
\\\hline
8&1&\cdot&\cdot&\cdot&\cdot\\
5,3&\cdot&1&\cdot&\cdot&\cdot\\
5,2,1&v&v&1&\cdot&\cdot\\
4,3,1&\cdot&v^2&v&1&\cdot\\
3^2,1^2&v&\cdot&v^2&v&1
\end{array}
\qquad = \qquad
\begin{array}{r|ccccc}
&\rt{\langle e-1 \rangle}
&\rt{\langle (e-1)^2 \rangle}
&\rt{\langle e-2,e-1 \rangle}
&\rt{\langle e-2 \rangle}
&\rt{\langle e-3 \rangle}
\\\hline
\langle e-1 \rangle&1&\cdot&\cdot&\cdot&\cdot\\
\langle (e-1)^2 \rangle&\cdot&1&\cdot&\cdot&\cdot\\
\langle e-2,e-1 \rangle&v&v&1&\cdot&\cdot\\
\langle e-2 \rangle&\cdot&v^2&v&1&\cdot\\
\langle e-3 \rangle&v&\cdot&v^2&v&1
\end{array}
\]
\end{answer}
Now, these graded decomposition numbers are not characteristic-free, but we may instead appeal to \cref{thm:wt2ext} to note that in characteristic 2 there is a non-trivial extension in $\Ext^1(\D{\langle e-1 \rangle}, \D{\langle (e-1)^2 \rangle})$, which combined with the fact that $\langle e-1 \rangle$, $\langle e-2,e-1 \rangle$, $\langle e-2 \rangle$, $\langle e-3 \rangle$ give a copy of $A^{(1)}_3$, tells us that we have a copy of $A^{(1)\wedge}_3$ in the Gabriel quiver, so that \cref{prop:matrixtrick} again tells us that these blocks are strictly wild and brick-wild.
Note that in characteristic $p\neq 2$, $\Ext^1(\D{\langle e-1 \rangle}, \D{\langle (e-1)^2 \rangle}) = 0$, and it is actually an extension in $\Ext^1(\D{\langle e-2,e-1 \rangle}, \D{\langle (e-1)^2 \rangle})$ that our matrix (or an application of \cite[Theorem~3.1]{faywt2}, based on \cite[Theorem~6.1]{ct01}) yields, but in either case we have the necessary copy of $A^{(1)\wedge}_3$.

\subsection{$p_{e-1} - p_{e-2} < e$ and $p_{e-2} - p_{e-3} > e$}\label{subsec:wt2fourthcase}

Next, suppose that $\rho$ satisfies $p_{e-1} - p_{e-2} < e$ and $p_{e-2} - p_{e-3} > e$.

This case was treated in \cite[Section~4.4]{als23}, and we used the partitions $\langle e-1 \rangle$, $\langle e-2 \rangle$, $\langle e-2,e-1 \rangle$, $\langle (e-1)^2 \rangle$ to obtain a copy of $A^{(1)}_3$ in the Gabriel quiver.
We include the additional partition $\langle (e-2)^2 \rangle$, and again use \cref{thm:runnerrem} to reduce the problem to the corresponding $n=8$ block, where the graded decomposition matrix in characteristic $0$ is of the form~(\ref{targetmatrixalt}), so that the result follows if the characteristic is not 2.
\begin{answer}
\[
\begin{array}{r|ccccc}
&\rt{7,1}
&\rt{6,2}
&\rt{4^2}
&\rt{4,2^2}
&\rt{\mathrlap{3,2^2,1}\hphantom{\langle e-2,e-1 \rangle}}
\\\hline
7,1&1&\cdot&\cdot&\cdot&\cdot\\
6,2&v&1&\cdot&\cdot&\cdot\\
4^2&\cdot&v&1&\cdot&\cdot\\
4,2^2&v&v^2&v&1&\cdot\\
3,2^2,1&v^2&\cdot&\cdot&v&1
\end{array}
\qquad = \qquad
\begin{array}{r|ccccc}
&\rt{\langle e-1 \rangle}
&\rt{\langle e-2 \rangle}
&\rt{\langle e-2,e-1 \rangle}
&\rt{\langle (e-1)^2 \rangle}
&\rt{\langle (e-2)^2 \rangle}
\\\hline
\langle e-1 \rangle&1&\cdot&\cdot&\cdot&\cdot\\
\langle e-2 \rangle&v&1&\cdot&\cdot&\cdot\\
\langle e-2,e-1 \rangle&\cdot&v&1&\cdot&\cdot\\
\langle (e-1)^2 \rangle&v&v^2&v&1&\cdot\\
\langle (e-2)^2 \rangle&v^2&\cdot&\cdot&v&1
\end{array}
\]
\end{answer}
Now, as in the previous case, these graded decomposition numbers are not characteristic-free, but we may instead appeal to \cref{thm:wt2ext} to note that in characteristic 2, there is a non-trivial extension in $\Ext^1(\D{\langle e-2,e-1 \rangle}, \D{\langle (e-2)^2 \rangle})$ (rather than the extension in $\Ext^1(\D{\langle (e-1)^2 \rangle}, \D{\langle (e-2)^2 \rangle})$ that exists in other characteristics).
Combining this with the fact that $\langle e-1 \rangle$, $\langle e-2 \rangle$, $\langle e-2,e-1 \rangle$, $\langle (e-1)^2 \rangle$ give a copy of $A^{(1)}_3$, tells us that we have a copy of $A^{(1)\wedge}_3$ in the Gabriel quiver, so that \cref{prop:matrixtrick} again tells us that these blocks are strictly wild and brick-wild.

\subsection{$p_{e-1} - p_{e-2} > e$ and $p_{e-2} - p_{e-3} > e$}\label{subsec:wt2fifthcase}

Finally, suppose that $\rho$ satisfies $p_{e-1} - p_{e-2} > e$ and $p_{e-2} - p_{e-3} > e$.
This case was treated in \cite[Section~4.5]{als23}, and we used the partitions $\langle e-1 \rangle$, $\langle (e-1)^2 \rangle$, $\langle e-2,e-1 \rangle$, $\langle e-2 \rangle$, and $\langle (e-2)^2 \rangle$ to obtain a copy of $D^{(1)}_4$ in the Gabriel quiver whenever $p\neq 2$.
If $e=3$, this is in fact the entire Gabriel quiver, and this case is just a single Scopes class, the class containing the Rouquier block.
\cref{prop:matrixtrick} does not apply to this block, and we are unable to determine whether or not it is strictly wild.

We assume for now that $e\geq 4$.
Taking partitions 
$\langle (e-1)^2 \rangle$, $\langle e-2,e-1 \rangle$, $\langle e-3,e-1 \rangle$, $\langle (e-2)^2 \rangle$, and $\langle e-3,e-2 \rangle$,
we may apply \cref{thm:runnerrem} to reduce the problem to the corresponding $n=17$ or $23$ block, depending on whether $p_{e-3} - p_{e-4} < e$ or $p_{e-3} - p_{e-4} > e$, respectively.
In either case, we may compute the graded decomposition numbers in characteristic $0$ by the LLT algorithm~\cite{LLT}.
By \cref{thm:wt2adjust}, since we have included no partition of the form $\langle i \rangle$, and we are in the case where $p_{e-1} - p_{e-2} > e$
 and $p_{e-2} - p_{e-3} > e$,
  our submatrix is characteristic-free, yielding matrices of the form~(\ref{targetmatrix}) in any characteristic.
\begin{align*}
\begin{array}{r|ccccc}
&\rt{9,6,2}
&\rt{9,5,3}
&\rt{9,2^3,1^2}
&\rt{5^2,3^2,1}
&\rt{\mathrlap{5^2,3,2,1^2}\hphantom{\langle e-3,e-1 \rangle}}
\\\hline
9,6,2&1&\cdot&\cdot&\cdot&\cdot\\
9,5,3&v&1&\cdot&\cdot&\cdot\\
9,2^3,1^2&\cdot&v&1&\cdot&\cdot\\
5^2,3^2,1&\cdot&v&\cdot&1&\cdot\\
5^2,3,2,1^2&\cdot&v^2&v&v&1
\end{array}
\qquad &= \qquad
\begin{array}{r|ccccc}
&\rt{\langle (e-1)^2 \rangle}
&\rt{\langle e-2,e-1 \rangle}
&\rt{\langle e-3,e-1 \rangle}
&\rt{\langle (e-2)^2 \rangle}
&\rt{\langle e-3,e-2 \rangle}
\\\hline
\langle (e-1)^2 \rangle&1&\cdot&\cdot&\cdot&\cdot\\
\langle e-2,e-1 \rangle&v&1&\cdot&\cdot&\cdot\\
\langle e-3,e-1 \rangle&\cdot&v&1&\cdot&\cdot\\
\langle (e-2)^2 \rangle&\cdot&v&\cdot&1&\cdot\\
\langle e-3,e-2 \rangle&\cdot&v^2&v&v&1
\end{array}\\
\\
\qquad &= \qquad
\begin{array}{r|ccccc}
&\rt{10,7,3,1^3}
&\rt{10,6,4,1^3}
&\rt{10,3^3,2^2}
&\rt{6^2,4^2,2,1}
&\rt{6^2,4,3,2^2}
\\\hline
10,7,3,1^3&1&\cdot&\cdot&\cdot&\cdot\\
10,6,4,1^3&v&1&\cdot&\cdot&\cdot\\
10,3^3,2^2&\cdot&v&1&\cdot&\cdot\\
6^2,4^2,2,1&\cdot&v&\cdot&1&\cdot\\
6^2,4,3,2^2&\cdot&v^2&v&v&1
\end{array}
\end{align*}
We thus obtain that for $e\geq4$, any block with $p_{e-1}-p_{e-2}>e$ and $p_{e-2}-p_{e-3}>e$ is strictly wild and brick-wild, by \cref{prop:matrixtrick}.

Finally, we return to the $e=3$ case, in which only the Scopes class containing the Rouquier block  satisfies $p_{e-1} - p_{e-2} > e$ and $p_{e-2} - p_{e-3} > e$.
For this block, if $p\neq 2$, the Gabriel quiver is $D^{(1)}_4$, while if $p=2$, the quiver is $A_5$.
In either case, our methods are unable to determine that these blocks are strictly wild.
We invite experts to attempt to resolve this final block.

\begin{answer}
From the decomposition matrices and some computations of homomorphisms between Specht modules, we can determine that for $p\neq2$, the five projective indecomposable modules in the block are as follows, where $3$ labels the vertex in the middle of the star $D^{(1)}_4$.
\[
\begin{tikzpicture}[scale=0.7]{\scalefont{0.7}
\draw (0.3, 0) node {$P_1 \cong$};
\draw (2,2) node {1};
\draw (2,1) node {3};
\draw (2,0) node {2};
\draw (1.2,0) node {1};
\draw (2.8,0) node {5};
\draw (2,-1) node {3};
\draw (2,-2) node {1};
}
\draw (2,1.65)--(2,1.35);
\draw (2,0.65)--(2,0.35);
\draw (2,-0.35)--(2,-0.65);
\draw (2,-1.35)--(2,-1.65);

\draw (1.8,0.65)--(1.3,0.35);
\draw (2.2,0.65)--(2.7,0.35);

\draw (1.3,-0.35)--(1.8,-0.65);
\draw (2.7,-0.35)--(2.2,-0.65);
\end{tikzpicture}
\qquad
\begin{tikzpicture}[scale=0.7]{\scalefont{0.7}
\draw (0.3, 0) node {$P_2 \cong$};
\draw (2,2) node {2};
\draw (2,1) node {3};
\draw (2,0) node {2};
\draw (1.2,0) node {1};
\draw (2.8,0) node {4};
\draw (2,-1) node {3};
\draw (2,-2) node {2};
}
\draw (2,1.65)--(2,1.35);
\draw (2,0.65)--(2,0.35);
\draw (2,-0.35)--(2,-0.65);
\draw (2,-1.35)--(2,-1.65);

\draw (1.8,0.65)--(1.3,0.35);
\draw (2.2,0.65)--(2.7,0.35);

\draw (1.3,-0.35)--(1.8,-0.65);
\draw (2.7,-0.35)--(2.2,-0.65);
\end{tikzpicture}
\qquad
\begin{tikzpicture}[scale=0.7]{\scalefont{0.7}
\draw (0.3, 0) node {$P_4 \cong$};
\draw (2,2) node {4};
\draw (2,1) node {3};
\draw (2,0) node {4};
\draw (1.2,0) node {2};
\draw (2.8,0) node {5};
\draw (2,-1) node {3};
\draw (2,-2) node {4};
}
\draw (2,1.65)--(2,1.35);
\draw (2,0.65)--(2,0.35);
\draw (2,-0.35)--(2,-0.65);
\draw (2,-1.35)--(2,-1.65);

\draw (1.8,0.65)--(1.3,0.35);
\draw (2.2,0.65)--(2.7,0.35);

\draw (1.3,-0.35)--(1.8,-0.65);
\draw (2.7,-0.35)--(2.2,-0.65);
\end{tikzpicture}
\qquad
\begin{tikzpicture}[scale=0.7]{\scalefont{0.7}
\draw (0.3, 0) node {$P_5 \cong$};
\draw (2,2) node {5};
\draw (2,1) node {3};
\draw (2,0) node {4};
\draw (1.2,0) node {1};
\draw (2.8,0) node {5};
\draw (2,-1) node {3};
\draw (2,-2) node {5};
}
\draw (2,1.65)--(2,1.35);
\draw (2,0.65)--(2,0.35);
\draw (2,-0.35)--(2,-0.65);
\draw (2,-1.35)--(2,-1.65);

\draw (1.8,0.65)--(1.3,0.35);
\draw (2.2,0.65)--(2.7,0.35);

\draw (1.3,-0.35)--(1.8,-0.65);
\draw (2.7,-0.35)--(2.2,-0.65);
\end{tikzpicture}
\]
The final projective indecomposable module, $P_3$, has the following radical structure.
\[
\begin{tikzpicture}[scale=0.7]{\scalefont{0.7}
\draw (2,2) node {3};
\draw (0.8,1) node {1};
\draw (1.6,1) node {2};
\draw (2.4,1) node {4};
\draw (3.2,1) node {5};
\draw (2,0) node {3};
\draw (1.2,0) node {3};
\draw (2.8,0) node {3};

\draw (0.8,-1) node {1};
\draw (1.6,-1) node {2};
\draw (2.4,-1) node {4};
\draw (3.2,-1) node {5};
%
\draw (2,-2) node {3};
}
%
%
\end{tikzpicture}
\]
\end{answer}

\section{Weight 3 blocks}\label{sec:wt3}

In this section, we will prove \cref{thm:MAIN} for weight 3 blocks.
In particular, for $e\geq 3$, we show that we can always find $A_3^{(1)\wedge}$ or $D_4^{(1)\wedge}$ (a four-pointed star with an extra edge from one point to a fifth vertex) as a subquiver of the Gabriel quiver for weight 3 blocks.

As in \cref{sec:wt2}, we must recall some notation for the partitions in a weight 3 block, once again using our ordering on runners with runner $i$ containing position $p_i$, with $p_0 < p_1 < \dots < p_{e-1}$.
In this section, $\langle i \rangle$ will denote the partition obtained from the core by sliding the lowest bead on runner $i$ down three places, while $\langle i, j \rangle$ will denote the partition obtained from the core by sliding the lowest bead on runner $i$ down two places, and the lowest bead on runner $j$ down one place.
Note that we allow $i=j$, so that $\langle i, i \rangle$ is obtained by sliding the lowest bead on runner $i$ down two places, and the second lowest bead down one place.
We let $\langle i^2, j \rangle$ denote the partition whose abacus display is obtained from that of the core by sliding down the lowest two beads on runner $i$ and the lowest bead on runner $j$ one place each.
We let $\langle i^3 \rangle$ denote the partition whose abacus display is obtained from that of the core by sliding the lowest three beads on runner $i$ down one place each.
Finally, we let $\langle i, j, k\rangle$, for $i<j<k$, denote the partition whose abacus display is obtained from that of the core by sliding down the lowest bead on runners $i$, $j$, and $k$ one place each.

As in \cite{als23}, we use results of Fayers and Tan to determine the adjustment matrices in characteristics 2 and 3, while they are known to be trivial in larger characteristics (\cite{fay08wt3}).
For the following result, the reader is invited to see \cite{faytan06} for the definition of a Rouquier block, and of inducing semi-simply and almost semi-simply -- we only require the explicit determination of when these occur.

\begin{thmc}{faytan06}{Theorem 3.3 and Propositions 3.4 and 3.5}\label{thm:wt3adj}
Suppose that $B$ is a weight $3$ block of $\hhh$.
\begin{enumerate}[label=(\roman*)]
\item If $\nchar \bbf = 2$, then
$a_{\nu\mu}(v) = 1$ if there is a Rouquier block $C$ and some $1\leq i\neq k \leq e-1$ such that
\begin{itemize}
\item $\nu$ induces semi-simply or almost semi-simply to $\langle i^3 \rangle$ in $C$, while $\mu$ induces semi-simply to $\langle i \rangle$ in $C$; or such that
\item $\nu$ induces semi-simply to $\langle i^2, k \rangle$ in $C$, while $\mu$ induces semi-simply to $\langle i, k \rangle$ in $C$.
\end{itemize}

\item If $\nchar \bbf = 3$, then
$a_{\nu\mu}(v) = 1$ if there is a Rouquier block $C$ and some $1\leq i \leq e-1$ such that
\begin{itemize}
\item $\nu$ induces semi-simply to $\langle i^3 \rangle$ in $C$, while $\mu$ induces semi-simply to $\langle i, i \rangle$ in $C$; or such that
\item $\nu$ induces semi-simply to $\langle i, i \rangle$ in $C$, while $\mu$ induces semi-simply to $\langle i \rangle$ in $C$.
\end{itemize}
\end{enumerate}
For all other $\nu$ and $\mu$, $a_{\nu\mu}(v) = \delta_{\nu\mu}$.

In particular, a partition $\la$ in $B$ induces up semi-simply to a partition $\omega$ in $C$ of one of the forms above if and only if $\omega$, $\la$ and $B$ satisfy one of the following sets of conditions, where $1 \leq i < k$.
(Note that as a matter of convention, we consider $p_e$ to be an arbitrary integer satisfying $p_e > p_{e-1} + 2e$.)

\FloatBarrier
\begin{table}[h]
\centering
\begin{tabular}{ccc}\toprule
$\omega$ & $\la$ & Conditions on $B$\\\midrule
& $\langle i \rangle$ & $p_{i+1} - p_i > 2e$\\\cmidrule{2-3}
$\langle i \rangle$ & $\langle i, i+1 \rangle$ & $p_{i+1} - p_i < 2e$, $p_{i+2} - p_i > e$\\\cmidrule{2-3}
& $\langle i, i+1, i+2 \rangle$ & $p_{i+2} - p_i < e$\\\midrule
\multirow{2}{*}{$\langle i, i \rangle$} & $\langle i, i \rangle$ & $p_{i+1} - p_i > e$, $p_i - p_{i-1} > e$\\\cmidrule{2-3}
& $\langle i^2, i+1 \rangle$ & $p_{i+1} - p_i < e$, $p_i - p_{i-1} > e$\\\midrule
$\langle i^3 \rangle$ & $\langle i^3 \rangle$ & $p_i - p_{i-1} > 2e$\\\midrule
& $\langle i, k \rangle$ & $p_k - p_i > 2e$, $p_{i+1} - p_i > e$\\\cmidrule{2-3}
$\langle i, k \rangle$ & $\langle i, i+1, k \rangle$ & $p_k - p_{i+1} > e$, $p_{i+1} - p_i < e$\\\cmidrule{2-3}
& $\langle k^2, i \rangle$ & $p_k - p_{i+1} < e$, $p_k - p_i < 2e$, $p_k - p_{i-1} > e$ \\\midrule
& $\langle k, i \rangle$ & $p_{k+1} - p_k > e$, $p_k - p_i > e$\\\cmidrule{2-3}
\multirow{2}{*}{$\langle k, i \rangle$} & $\langle k, k \rangle$ & $p_{k+1} - p_k > e$, $p_k - p_i < e$, $p_k - p_{i-1} > e$\\\cmidrule{2-3}
& $\langle i, k, k+1 \rangle$ & $p_{k+1} - p_k < e$, $p_k - p_i > e$\\\cmidrule{2-3}
& $\langle k^2, k+1 \rangle$ & $p_{k+1} - p_k < e$, $p_k - p_i < e$, $p_k - p_{i-1} > e$\\\midrule
\multirow{2}{*}{$\langle i^2, k \rangle$} & $\langle i^2, k \rangle$ & $p_k - p_i > e$, $p_i - p_{i-1} > e$\\\cmidrule{2-3}
& $\langle k^3 \rangle$ & $p_k - p_i < e$, $p_k - p_{i-1} > 2e$\\\midrule
\multirow{2}{*}{$\langle k^2, i \rangle$} & $\langle k^2, i \rangle$ & $p_k - p_i > 2e$, $p_k - p_{k-1} > e$\\\cmidrule{2-3}
& $\langle k^3 \rangle$ & $2e > p_k - p_i > e$, $p_k - p_{i-1} > 2e$, $2e > p_k - p_{k-1} > e$\\
\bottomrule
\end{tabular}
\end{table}
\FloatBarrier
Moreover, if $e< p_i - p_{i-1} < 2e$ and $p_{i+1} - p_{i-1} > 2e$, then $\langle i-1 \rangle$ induces almost semi-simply to $\langle i^3 \rangle$ in a Rouquier block $C$.
\end{thmc}

We will use the above result in much the same way as we did in \cite[Section~5]{als23}.

\subsection{$p_{e-1}-p_{e-3}<e$}\label{subsec:wt3firstcase}

This case was treated in \cite[Section~5.1]{als23}, and we used the partitions $\langle e-3 \rangle$, $\langle e-1, e-3 \rangle$, $\langle e-2, e-3 \rangle$, $\langle e-3,e-2 \rangle$ to yield our result there (obtaining a copy of $A^{(1)}_3$ in the Gabriel quiver).
Now, if we furthermore include $\langle e-1,e-2 \rangle$, and note that \cref{thm:runnerrem} allows us to restrict the problem of finding characteristic $0$ graded decomposition numbers down to $n=9$, where $\langle e-1,e-2 \rangle = (6,3)$, and we find that the submatrix of the graded decomposition matrix given by these 5 partitions is of the form~(\ref{targetmatrix3}).
\begin{answer}
\[
\begin{array}{r|ccccc}
&\rt{7,1^2}
&\rt{6,3}
&\rt{6,2,1}
&\rt{5,2^2}
&\rt{\mathrlap{4,3,2}\hphantom{\langle e-3,e-2 \rangle}}
\\\hline
7,1^2&1&\cdot&\cdot&\cdot&\cdot\\
6,3&\cdot&1&\cdot&\cdot&\cdot\\
6,2,1&v&v&1&\cdot&\cdot\\
5,2^2&\cdot&v^2&v&1&\cdot\\
4,3,2&v&v^3&v^2&v&1
\end{array}
\qquad = \qquad
\begin{array}{r|ccccc}
&\rt{\langle e-3 \rangle}
&\rt{\langle e-1,e-2 \rangle}
&\rt{\langle e-1,e-3 \rangle}
&\rt{\langle e-2,e-3 \rangle}
&\rt{\langle e-3,e-2 \rangle}
\\\hline
\langle e-3 \rangle&1&\cdot&\cdot&\cdot&\cdot\\
\langle e-1,e-2 \rangle&\cdot&1&\cdot&\cdot&\cdot\\
\langle e-1,e-3 \rangle&v&v&1&\cdot&\cdot\\
\langle e-2,e-3 \rangle&\cdot&v^2&v&1&\cdot\\
\langle e-3,e-2 \rangle&v&v^3&v^2&v&1
\end{array}
\]
\end{answer}
By \cref{thm:wt3adj}, we see that: $\langle e-1\rangle$ induces semi-simply to $\langle e-1\rangle$;
$\langle e-2, e-1\rangle$ induces semi-simply to $\langle e-2\rangle$;
no partition induces semi-simply to $\langle e-2, e-1\rangle$ or $\langle e-1, e-2\rangle$;
and no partition induces semi-simply to $\langle e-1, e-1\rangle$ or $\langle e-2, e-2\rangle$.
Since our chosen partitions do not include $\langle e-1\rangle$ or $\langle e-2, e-1\rangle$, $a_{\nu\mu} = \delta_{\nu\mu}$ for all $\mu$ among them, and we see that the above submatrix is characteristic-free.
We thus obtain that the Gabriel quiver for any block with $p_{e-1}-p_{e-3}<e$ contains a copy of $A_3^{(1)\wedge}$, so by \cref{prop:matrixtrick} these blocks are strictly wild and brick-wild.

\subsection{$p_{e-1} - p_{e-2} < e$ and $p_{e-2} - p_{e-3} < e$, but $p_{e-1} - p_{e-3} > e$}\label{subsec:wt3secondcase}

In \cite[Section~5.2]{als23}, we used the partitions $\langle e-2 \rangle$, $\langle e-1, e-2 \rangle$, $\langle e-3 \rangle$, and $\langle e-2, e-3 \rangle$ to obtain a copy of $A^{(1)}_3$ in the Gabriel quiver.
If we furthermore include the partition $\langle e-1, e-1 \rangle$, and apply \cref{thm:runnerrem}, we restrict the characteristic $0$ decomposition matrix problem down to $n=10$, where $\langle e-1, e-1 \rangle = (7,2,1)$, and we find that the submatrix of the graded decomposition matrix given by these 5 partitions is of the form~(\ref{targetmatrixalt2}).
\begin{answer}
\[
\begin{array}{r|ccccc}
&\rt{8,2}
&\rt{7,3}
&\rt{7,2,1}
&\rt{6,2,1^2}
&\rt{\mathrlap{5,2^2,1}\hphantom{\langle e-3,e-2 \rangle}}
\\\hline
8,2&1&\cdot&\cdot&\cdot&\cdot\\
7,3&v&1&\cdot&\cdot&\cdot\\
7,2,1&v^2&v&1&\cdot&\cdot\\
6,2,1^2&\cdot&v&v^2&1&\cdot\\
5,2^2,1&v&v^2&\cdot&v&1
\end{array}
\qquad = \qquad
\begin{array}{r|ccccc}
&\rt{\langle e-2 \rangle}
&\rt{\langle e-1,e-2 \rangle}
&\rt{\langle e-1,e-1 \rangle}
&\rt{\langle e-3 \rangle}
&\rt{\langle e-2,e-3 \rangle}
\\\hline
\langle e-2 \rangle&1&\cdot&\cdot&\cdot&\cdot\\
\langle e-1,e-2 \rangle&v&1&\cdot&\cdot&\cdot\\
\langle e-1,e-1 \rangle&v^2&v&1&\cdot&\cdot\\
\langle e-3 \rangle&\cdot&v&v^2&1&\cdot\\
\langle e-2,e-3 \rangle&v&v^2&\cdot&v&1
\end{array}
\]
\end{answer}
We apply \cref{thm:wt3adj} again.
We see that: $\langle e-1\rangle$ induces semi-simply to $\langle e-1\rangle$;
$\langle e-2, e-1\rangle$ induces semi-simply to $\langle e-2\rangle$;
$\langle (e-1)^2, e-2\rangle$ induces semi-simply to $\langle e-2, e-1\rangle$, but no partition induces semi-simply to $\langle (e-2)^2, e-1\rangle$;
$\langle e-1, e-1\rangle$ induces semi-simply to $\langle e-1, e-2\rangle$ but no partition induces semi-simply to $\langle (e-1)^2, e-2\rangle$;
and no partition induces semi-simply to $\langle e-1, e-1\rangle$ or $\langle e-2, e-2\rangle$.
Since our chosen partitions do not include $\langle e-1\rangle$ or $\langle e-2, e-1\rangle$, $a_{\nu\mu} = \delta_{\nu\mu}$ for all $\mu$ among them, and we see that the above submatrix is characteristic-free.
By \cref{prop:matrixtrick}, these blocks are therefore strictly wild and brick-wild.

\subsection{$p_{e-1} - p_{e-2} > e$ and $p_{e-2} - p_{e-3} < e$}\label{subsec:wt3thirdcase}

As in \cite[Section~5.3]{als23}, the $e=3$ situation reduces to three Scopes classes, with minimal representatives being the blocks with cores $\rho^{(1)}=(2)$, $\rho^{(2)}=(3,1)$, and $\rho^{(3)}=(4,2)$, respectively.
We handle these one by one below, and it is clear that we may apply \cref{thm:runnerrem} to reduce from arbitrary $e$ down to $e=3$, landing in one of these cases.

\begin{enumerate}
\item
The weight 3 block with core $\rho^{(1)}=(2)$.
In \cite[Section~5.3]{als23}, we used the partitions $\langle e-1,e-1 \rangle$, $\langle e-1, e-2 \rangle$, $\langle (e-1)^2, e-2 \rangle$, and $\langle (e-1)^2,e-3 \rangle$.
Here we also include $\langle e-2 \rangle = (7,3,1)$, yielding a submatrix of the graded decomposition matrix of the form~(\ref{targetmatrixalt2}).
\begin{answer}
\[
\begin{array}{r|ccccc}
&\rt{8,3}
&\rt{8,2,1}
&\rt{7,3,1}
&\rt{5,3^2}
&\rt{\mathrlap{5,3,2,1}\hphantom{\langle (e-1)^2,e-3 \rangle}}
\\\hline
8,3&1&\cdot&\cdot&\cdot&\cdot\\
8,2,1&v&1&\cdot&\cdot&\cdot\\
7,3,1&v^2&v&1&\cdot&\cdot\\
5,3^2&\cdot&v&v^2&1&\cdot\\
5,3,2,1&v&v^2&v^3&v&1
\end{array}
\qquad = \qquad
\begin{array}{r|ccccc}
&\rt{\langle e-1,e-1 \rangle}
&\rt{\langle e-1,e-2 \rangle}
&\rt{\langle e-2 \rangle}
&\rt{\langle (e-1)^2,e-2 \rangle}
&\rt{\langle (e-1)^2,e-3 \rangle}
\\\hline
\langle e-1,e-1 \rangle&1&\cdot&\cdot&\cdot&\cdot\\
\langle e-1,e-2 \rangle&v&1&\cdot&\cdot&\cdot\\
\langle e-2 \rangle&v^2&v&1&\cdot&\cdot\\
\langle (e-1)^2,e-2 \rangle&\cdot&v&v^2&1&\cdot\\
\langle (e-1)^2,e-3 \rangle&v&v^2&v^3&v&1
\end{array}
\]
\end{answer}
\cref{thm:runnerrem} allows us, as usual, to reduce to the above case in characteristic $0$.
As in \cref{subsec:wt3secondcase}, we apply \cref{thm:wt3adj} and see that:
$\langle e-1\rangle$ induces semi-simply to $\langle e-1\rangle$;
$\langle e-2, e-1\rangle$ induces semi-simply to $\langle e-2\rangle$;
$\langle (e-1)^2, e-2\rangle$ induces semi-simply to $\langle e-2, e-1\rangle$, but no partition induces semi-simply to $\langle (e-2)^2, e-1\rangle$;
$\langle e-1, e-2\rangle$ induces semi-simply to $\langle e-1, e-2\rangle$ but no partition induces semi-simply to $\langle (e-1)^2, e-2\rangle$;
$\langle e-1, e-1\rangle$ induces semi-simply to $\langle e-1, e-1\rangle$ but no partition induces semi-simply to $\langle (e-1)^3\rangle$;
and no partition induces semi-simply to $\langle e-2, e-2\rangle$.
Since our chosen partitions do not include $\langle e-1\rangle$ or $\langle e-2, e-1\rangle$, $a_{\nu\mu} = \delta_{\nu\mu}$ for all $\mu$ among them, and we see that the above submatrix is characteristic-free.
By \cref{prop:matrixtrick}, these blocks are therefore strictly wild and brick-wild.

\item The weight 3 block with core $\rho^{(2)}=(3,1)$.
In \cite[Section~5.3]{als23}, we used the partitions $\langle (e-1)^2, e-2 \rangle$, $\langle e-3, e-1 \rangle$, $\langle e-3 \rangle$, and $\langle e-2, e-3 \rangle$.
Here we also include $\langle e-1,e-1 \rangle = (9,4)$, yielding a submatrix of the graded decomposition matrix of the form~(\ref{targetmatrixaltsquare}).
\begin{answer}
\[
\begin{array}{r|ccccc}
&\rt{9,4}
&\rt{6,4,3}
&\rt{6,3,2,1^2}
&\rt{5,4,2,1^2}
&\rt{\mathrlap{4^2,2^2,1}\hphantom{\langle (e-1)^2,e-2 \rangle}}
\\\hline
9,4&1&\cdot&\cdot&\cdot&\cdot\\
6,4,3&\cdot&1&\cdot&\cdot&\cdot\\
6,3,2,1^2&v&v&1&\cdot&\cdot\\
5,4,2,1^2&v^2&v^2&v&1&\cdot\\
4^2,2^2,1&v^3&v&v^2&v&1
\end{array}
\qquad = \qquad
\begin{array}{r|ccccc}
&\rt{\langle e-1,e-1 \rangle}
&\rt{\langle (e-1)^2,e-2 \rangle}
&\rt{\langle e-3,e-1 \rangle}
&\rt{\langle e-3 \rangle}
&\rt{\langle e-2,e-3 \rangle}
\\\hline
\langle e-1,e-1 \rangle&1&\cdot&\cdot&\cdot&\cdot\\
\langle (e-1)^2,e-2 \rangle&\cdot&1&\cdot&\cdot&\cdot\\
\langle e-3,e-1 \rangle&v&v&1&\cdot&\cdot\\
\langle e-3 \rangle&v^2&v^2&v&1&\cdot\\
\langle e-2,e-3 \rangle&v^3&v&v^2&v&1
\end{array}
\]
\end{answer}
Once again, \cref{thm:runnerrem} allows us to reduce to the above case in characteristic $0$.
Applying \cref{thm:wt3adj}, we see that:
$\langle e-1\rangle$ induces semi-simply to $\langle e-1\rangle$;
$\langle e-2, e-1\rangle$ induces semi-simply to $\langle e-2\rangle$;
$\langle (e-1)^2, e-2\rangle$ induces semi-simply to $\langle e-2, e-1\rangle$, but no partition induces semi-simply to $\langle (e-2)^2, e-1\rangle$;
$\langle e-1, e-2\rangle$ induces semi-simply to $\langle e-1, e-2\rangle$;
$\langle e-1, e-1\rangle$ induces semi-simply to $\langle e-1, e-1\rangle$ but no partition induces semi-simply to $\langle (e-1)^3\rangle$;
and no partition induces semi-simply to $\langle e-2, e-2\rangle$.
Since our chosen partitions do not include $\langle e-1\rangle$, $\langle e-2, e-1\rangle$, or $\langle e-1, e-2\rangle$, $a_{\nu\mu} = \delta_{\nu\mu}$ for all $\mu$ among them, and we see that the above submatrix is characteristic-free.
By \cref{prop:matrixtrick}, these blocks are therefore strictly wild and brick-wild.

\item The weight 3 block with core $\rho^{(3)}=(4,2)$.
We use the partitions $\langle (e-1)^2,e-2 \rangle$, $\langle e-2, e-1 \rangle$, $\langle e-3, e-1 \rangle$, $\langle e-3 \rangle$, and $\langle e-2,e-3 \rangle$.
Compared to \cite[Section~5.3]{als23}, we have used $\langle e-2,e-3 \rangle = (4^2,3,2^2)$ instead of $\langle e-2 \rangle$, and include the additional partition $\langle (e-1)^2,e-2 \rangle = (7,5,2,1)$ in order to yield a submatrix of the graded decomposition matrix of the form~(\ref{targetmatrixaltsquare2}).
Here we also include $\langle (e-1)^2,e-2 \rangle = (7,5,2,1)$, yielding a submatrix of the graded decomposition matrix of the form~(\ref{targetmatrixaltsquare2}).
\begin{answer}
\[
\begin{array}{r|ccccc}
&\rt{7,5,2,1}
&\rt{7,4,3,1}
&\rt{7,3^2,1^2}
&\rt{5^2,3,1^2}
&\rt{\mathrlap{4^2,3,2^2}\hphantom{\langle (e-1)^2,e-2 \rangle}}
\\\hline
7,5,2,1&1&\cdot&\cdot&\cdot&\cdot\\
7,4,3,1&v&1&\cdot&\cdot&\cdot\\
7,3^2,1^2&v^2&v&1&\cdot&\cdot\\
5^2,3,1^2&v^3&v^2&v&1&\cdot\\
4^2,3,2^2&\cdot&v&v^2&v&1
\end{array}
\qquad = \qquad
\begin{array}{r|ccccc}
&\rt{\langle (e-1)^2,e-2 \rangle}
&\rt{\langle e-2,e-1 \rangle}
&\rt{\langle e-3,e-1 \rangle}
&\rt{\langle e-3 \rangle}
&\rt{\langle e-2,e-3 \rangle}
\\\hline
\langle (e-1)^2,e-2 \rangle&1&\cdot&\cdot&\cdot&\cdot\\
\langle e-2,e-1 \rangle&v&1&\cdot&\cdot&\cdot\\
\langle e-3,e-1 \rangle&v^2&v&1&\cdot&\cdot\\
\langle e-3 \rangle&v^3&v^2&v&1&\cdot\\
\langle e-2,e-3 \rangle&\cdot&v&v^2&v&1
\end{array}
\]
\end{answer}
Once again, \cref{thm:runnerrem} allows us to reduce to the above case in characteristic $0$.
By \cref{thm:wt3adj}, we see that:
$\langle e-1\rangle$ induces semi-simply to $\langle e-1\rangle$;
$\langle e-2\rangle$ induces semi-simply to $\langle e-2\rangle$;
$\langle e-2, e-1\rangle$ induces semi-simply to $\langle e-2, e-1\rangle$, but no partition induces semi-simply to $\langle (e-2)^2, e-1\rangle$;
$\langle e-1, e-2\rangle$ induces semi-simply to $\langle e-1, e-2\rangle$;
$\langle e-1, e-1\rangle$ induces semi-simply to $\langle e-1, e-1\rangle$;
and no partition induces semi-simply to $\langle e-2, e-2\rangle$.
Since our chosen partitions do not include $\langle e-1\rangle$, $\langle e-2\rangle$, $\langle e-1, e-2\rangle$, or $\langle e-1, e-1\rangle$, $a_{\nu\mu} = \delta_{\nu\mu}$ for all $\mu$ among them, and we see that the above submatrix is characteristic-free.
By \cref{prop:matrixtrick}, these blocks are therefore strictly wild and brick-wild.
\end{enumerate}

\subsection{$p_{e-1} - p_{e-2} < e$ and $p_{e-2} - p_{e-3} > e$}\label{subsec:wt3fourthcase}

As in \cite[Section~5.4]{als23}, the $e=3$ situation reduces to three Scopes classes, with minimal representatives being the blocks with cores $\sigma^{(1)}=(1^2)$, $\sigma^{(2)}=(2,1^2)$, and $\sigma^{(3)}=(2^2,1^2)$, respectively.
We handle these in turn below.

\begin{enumerate}
\item
The weight 3 block with core $\sigma^{(1)}=(1^2)$.
In \cite[Section~5.4]{als23}, we used the partitions $\langle e-1,e-1 \rangle$, $\langle e-2, e-2 \rangle$, $\langle (e-1)^2, e-2 \rangle$, and $\langle (e-2)^2,e-1 \rangle$.
Here we also include $\langle e-2 \rangle = (9,2)$, yielding a submatrix of the graded decomposition matrix of the form~(\ref{targetmatrix1.5}).
\begin{answer}
\[
\begin{array}{r|ccccc}
&\rt{9,2}
&\rt{7,2^2}
&\rt{6,2^2,1}
&\rt{4^2,3}
&\rt{\mathrlap{4^2,2,1}\hphantom{\langle (e-1)^2,e-2 \rangle}}
\\\hline
9,2&1&\cdot&\cdot&\cdot&\cdot\\
7,2^2&v^2&1&\cdot&\cdot&\cdot\\
6,2^2,1&v&v&1&\cdot&\cdot\\
4^2,3&\cdot&v&\cdot&1&\cdot\\
4^2,2,1&v^2&v^2&v&v&1
\end{array}
\qquad = \qquad
\begin{array}{r|ccccc}
&\rt{\langle e-2 \rangle}
&\rt{\langle e-1,e-1 \rangle}
&\rt{\langle e-2,e-2 \rangle}
&\rt{\langle (e-1)^2,e-2 \rangle}
&\rt{\langle (e-2)^2,e-1 \rangle}
\\\hline
\langle e-2 \rangle&1&\cdot&\cdot&\cdot&\cdot\\
\langle e-1,e-1 \rangle&v^2&1&\cdot&\cdot&\cdot\\
\langle e-2,e-2 \rangle&v&v&1&\cdot&\cdot\\
\langle (e-1)^2,e-2 \rangle&\cdot&v&\cdot&1&\cdot\\
\langle (e-2)^2,e-1 \rangle&v^2&v^2&v&v&1
\end{array}
\]
\end{answer}
\cref{thm:runnerrem} allows us, as usual, to reduce to the above case in characteristic $0$.
Applying \cref{thm:wt3adj}, we see that:
$\langle e-1\rangle$ induces semi-simply to $\langle e-1\rangle$;
$\langle e-2, e-1\rangle$ induces semi-simply to $\langle e-2\rangle$;
$\langle (e-1)^2, e-2\rangle$ induces semi-simply to $\langle e-2, e-1\rangle$, but no partition induces semi-simply to $\langle (e-2)^2, e-1\rangle$;
$\langle e-1, e-1\rangle$ induces semi-simply to $\langle e-1, e-2\rangle$ but no partition induces semi-simply to $\langle (e-1)^2, e-2\rangle$;
$\langle (e-2)^2, e-1\rangle$ induces semi-simply to $\langle e-2, e-2\rangle$ but no partition induces semi-simply to $\langle (e-2)^3\rangle$;
and no partition induces semi-simply to $\langle e-1, e-1\rangle$.
Since our chosen partitions do not include $\langle e-1\rangle$ or $\langle e-2, e-1\rangle$, $a_{\nu\mu} = \delta_{\nu\mu}$ for all $\mu$ among them, and we see that the above submatrix is characteristic-free.
By \cref{prop:matrixtrick}, these blocks are therefore strictly wild and brick-wild.

\item
The weight 3 block with core $\sigma^{(2)}=(2,1^2)$.
In \cite[Section~5.4]{als23}, we used the partitions $\langle e-2 \rangle$, $\langle e-1, e-2 \rangle$, $\langle e-1, e-1 \rangle$, and $\langle e-2, e-2 \rangle$.
Here we also include $\langle (e-2)^2,e-1 \rangle = (5,4,2^2)$, yielding a submatrix of the graded decomposition matrix of the form~(\ref{targetmatrixalt}).
\begin{answer}
\[
\begin{array}{r|ccccc}
&\rt{9,3,1}
&\rt{8,4,1}
&\rt{8,3,2}
&\rt{6,3,2^2}
&\rt{\mathrlap{5,4,2^2}\hphantom{\langle (e-2)^2,e-1 \rangle}}
\\\hline
9,3,1&1&\cdot&\cdot&\cdot&\cdot\\
8,4,1&v&1&\cdot&\cdot&\cdot\\
8,3,2&v^2&v&1&\cdot&\cdot\\
6,3,2^2&v&v^2&v&1&\cdot\\
5,4,2^2&v^2&v^3&v^2&v&1
\end{array}
\qquad = \qquad
\begin{array}{r|ccccc}
&\rt{\langle e-2 \rangle}
&\rt{\langle e-1,e-2 \rangle}
&\rt{\langle e-1,e-1 \rangle}
&\rt{\langle e-2,e-2 \rangle}
&\rt{\langle (e-2)^2,e-1 \rangle}
\\\hline
\langle e-2 \rangle&1&\cdot&\cdot&\cdot&\cdot\\
\langle e-1,e-2 \rangle&v&1&\cdot&\cdot&\cdot\\
\langle e-1,e-1 \rangle&v^2&v&1&\cdot&\cdot\\
\langle e-2,e-2 \rangle&v&v^2&v&1&\cdot\\
\langle (e-2)^2,e-1 \rangle&v^2&v^3&v^2&v&1
\end{array}
\]
\end{answer}
The result again follows by \cref{thm:runnerrem,thm:wt3adj}, as we see that:
$\langle e-1\rangle$ induces semi-simply to $\langle e-1\rangle$;
$\langle e-2, e-1\rangle$ induces semi-simply to $\langle e-2\rangle$;
$\langle (e-1)^2, e-2\rangle$ induces semi-simply to $\langle e-2, e-1\rangle$;
$\langle e-1, e-1\rangle$ induces semi-simply to $\langle e-1, e-2\rangle$ but no partition induces semi-simply to $\langle (e-1)^2, e-2\rangle$;
$\langle (e-2)^2, e-1\rangle$ induces semi-simply to $\langle e-2, e-2\rangle$ but no partition induces semi-simply to $\langle (e-2)^3\rangle$;
and no partition induces semi-simply to $\langle e-1, e-1\rangle$.
Since our chosen partitions do not include $\langle e-1\rangle$, $\langle e-2, e-1\rangle$,  or $\langle (e-1)^2, e-2\rangle$, $a_{\nu\mu} = \delta_{\nu\mu}$ for all $\mu$ among them, and we see that the above submatrix is characteristic-free.
By \cref{prop:matrixtrick}, these blocks are therefore strictly wild and brick-wild.

\item
The weight 3 block with core $\sigma^{(3)}=(2^2,1^2)$.
In \cite[Section~5.4]{als23}, we used the partitions $\langle e-1 \rangle$, $\langle e-2 \rangle$, $\langle e-1, e-2 \rangle$, and $\langle e-1, e-1 \rangle$, but this resulted in the need for separate treatment for $p=2$.
Instead, we will use the partitions $\langle e-2 \rangle$, $\langle e-1,e-2 \rangle$, $\langle e-1, e-1 \rangle$, $\langle e-2, e-2 \rangle$, and $\langle (e-1)^3 \rangle$, yielding a submatrix of the graded decomposition matrix of the form~(\ref{targetmatrixaltsquare1.5}).
\begin{answer}
\[
\begin{array}{r|ccccc}
&\rt{10,3,1^2}
&\rt{8,5,1^2}
&\rt{8,3^2,1}
&\rt{7,3^2,2}
&\rt{\mathrlap{5,3^2,2^2}\hphantom{\langle e-2,e-2 \rangle}}
\\\hline
10,3,1^2&1&\cdot&\cdot&\cdot&\cdot\\
8,5,1^2&v&1&\cdot&\cdot&\cdot\\
8,3^2,1&v^2&v&1&\cdot&\cdot\\
7,3^2,2&v&v^2&v&1&\cdot\\
5,3^2,2^2&v^3&\dot&v&v^2&1
\end{array}
\qquad = \qquad
\begin{array}{r|ccccc}
&\rt{\langle e-2 \rangle}
&\rt{\langle e-1,e-2 \rangle}
&\rt{\langle e-1,e-1 \rangle}
&\rt{\langle e-2,e-2 \rangle}
&\rt{\langle (e-1)^3 \rangle}
\\\hline
\langle e-2 \rangle&1&\cdot&\cdot&\cdot&\cdot\\
\langle e-1,e-2 \rangle&v&1&\cdot&\cdot&\cdot\\
\langle e-1,e-1 \rangle&v^2&v&1&\cdot&\cdot\\
\langle e-2,e-2 \rangle&v&v^2&v&1&\cdot\\
\langle (e-1)^3 \rangle&v^3&\cdot&v&v^2&1
\end{array}
\]
\end{answer}
We previously missed that such uniform treatment was possible, so give some more detail here, following \cref{thm:runnerrem,thm:wt3adj} as usual.
We see that:
$\langle e-1\rangle$ induces semi-simply to $\langle e-1\rangle$;
$\langle e-2, e-1\rangle$ induces semi-simply to $\langle e-2\rangle$;
$\langle (e-1)^2, e-2\rangle$ induces semi-simply to $\langle e-2, e-1\rangle$;
$\langle e-1, e-1\rangle$ induces semi-simply to $\langle e-1, e-2\rangle$ but no partition induces semi-simply to $\langle (e-1)^2, e-2\rangle$;
$\langle (e-2)^2, e-1\rangle$ induces semi-simply to $\langle e-2, e-2\rangle$;
and no partition induces semi-simply to $\langle e-1, e-1\rangle$.
Since our chosen partitions do not include $\langle e-1\rangle$, $\langle e-2, e-1\rangle$, $\langle (e-2)^2, e-1\rangle$, or $\langle (e-1)^2, e-2\rangle$, $a_{\nu\mu} = \delta_{\nu\mu}$ for all $\mu$ among them, we see that the above submatrix is characteristic-free.
By \cref{prop:matrixtrick}, these blocks are therefore strictly wild and brick-wild.
\end{enumerate}

\subsection{$p_{e-1} - p_{e-2} > e$ and $p_{e-2} - p_{e-3} > e$}\label{subsec:wt3fifthcase}

As in \cite[Section~5.5]{als23}, we apply \cref{thm:runnerrem} to reduce to the $e=3$ case, and have four Scopes classes to consider, with minimal representatives being the blocks with cores $\kappa_{(e+1,e+1)} = (3,1^2)$, $\kappa_{(2e+1,e+1)} = (5,3,1^2)$, $\kappa_{(e+1,2e+1)} = (4,2^2,1^2)$, and $\kappa_{(2e+1,2e+1)} = (6,4,2^2,1^2)$, respectively.
We handle these in turn below.

\begin{enumerate}
\item The weight 3 block with core $\kappa_{(e+1,e+1)} = (3,1^2)$.
In \cite[Section~5.5]{als23}, we used the partitions $\langle e-1, e-1 \rangle = (9,4,1)$, $\langle e-1,e-2 \rangle = (9,3,2)$, $\langle (e-1)^2,e-2 \rangle = (6,4^2)$, and $\langle (e-1)^3 \rangle = (6,4,2^2)$ for $p\neq 2$.
Also assuming that $p\neq 2$, we may include $\langle e-2 \rangle = (8,4,2)$, yielding a submatrix of the graded decomposition matrix of the form~(\ref{targetmatrixalt2}).
\begin{answer}
\[
\begin{array}{r|ccccc}
&\rt{9,4,1}
&\rt{9,3,2}
&\rt{8,4,2}
&\rt{6,4^2}
&\rt{\mathrlap{6,4,2^2}\hphantom{\langle (e-1)^2,e-2 \rangle}}
\\\hline
9,4,1&1&\cdot&\cdot&\cdot&\cdot\\
9,3,2&v&1&\cdot&\cdot&\cdot\\
8,4,2&v^2&v&1&\cdot&\cdot\\
6,4^2&\cdot&v&v^2&1&\cdot\\
6,4,2^2&v&v^2&v^3&v&1
\end{array}
\qquad = \qquad
\begin{array}{r|ccccc}
&\rt{\langle e-1,e-1 \rangle}
&\rt{\langle e-1,e-2 \rangle}
&\rt{\langle e-2 \rangle}
&\rt{\langle (e-1)^2,e-2 \rangle}
&\rt{\langle (e-1)^3 \rangle}
\\\hline
\langle e-1,e-1 \rangle&1&\cdot&\cdot&\cdot&\cdot\\
\langle e-1,e-2 \rangle&v&1&\cdot&\cdot&\cdot\\
\langle e-2 \rangle&v^2&v&1&\cdot&\cdot\\
\langle (e-1)^2,e-2 \rangle&\cdot&v&v^2&1&\cdot\\
\langle (e-1)^3 \rangle&v&v^2&v^3&v&1
\end{array}
\]
\end{answer}
\cref{thm:runnerrem} allows us, as usual, to reduce to the above case in characteristic zero.
We apply \cref{thm:wt3adj} and see that:
$\langle e-1\rangle$ induces semi-simply to $\langle e-1\rangle$ and $\langle e-2\rangle$ induces almost semi-simply to $\langle (e-1)^3\rangle$;
$\langle e-2, e-1\rangle$ induces semi-simply to $\langle e-2\rangle$ and $\langle e-3\rangle$ induces almost semi-simply to $\langle (e-2)^3\rangle$;
$\langle (e-1)^2, e-2\rangle$ induces semi-simply to $\langle e-2, e-1\rangle$ and 
$\langle (e-2)^2, e-1\rangle$ induces semi-simply to $\langle (e-2)^2, e-1\rangle$;
$\langle e-1, e-2\rangle$ induces semi-simply to $\langle e-1, e-2\rangle$ and 
$\langle (e-1)^3\rangle$ induces semi-simply to $\langle (e-1)^2, e-2\rangle$;
$\langle e-1, e-1\rangle$ induces semi-simply to $\langle e-1, e-1\rangle$, but no partition induces semi-simply to $\langle (e-1)^3 \rangle$;
$\langle e-2, e-2\rangle$ induces semi-simply to $\langle e-2, e-2\rangle$, but no partition induces semi-simply to $\langle (e-2)^3\rangle$.
Since our chosen partitions do not include $\langle e-1\rangle$ or $\langle e-2, e-1\rangle$, $a_{\nu\mu} = \delta_{\nu\mu}$ for all $\mu$ among them if $p\neq 2$, and we see that the above submatrix is the same for any $p\neq 2$.
By \cref{prop:matrixtrick}, these blocks are therefore strictly wild and brick-wild.

It remains to check the case $p=2$, in which the above partitions do not give a submatrix that is the same in characteristics $0$ and $2$.
In \cite[Section~5.5]{als23}, we used the four partitions $\langle e-1, e-2 \rangle = (9,3,2)$, $\langle e-2 \rangle = (8,4,2)$, $\langle e-2, e-1 \rangle = (6^2,2)$, and $\langle (e-1)^2, e-2 \rangle = (6,4^2)$ for $p=2$.
Here we include $\langle e-1, e-1 \rangle = (9,4,1)$, and by the list above, we may check that our submatrix remains unchanged in characteristic $2$.

To do this, we note that we do not include the partition $\langle e-1\rangle$, but still have
\[
a_{\langle e-3 \rangle \langle e-2, e-1 \rangle} = a_{\langle (e-1)^3 \rangle \langle e-1, e-2 \rangle} = a_{\langle (e-2)^2, e-1 \rangle \langle (e-1)^2, e-2 \rangle} = 1
\]
Since $\langle e-3 \rangle$, $\langle (e-1)^3 \rangle$, and $\langle (e-2)^2, e-1 \rangle$ are all dominated by our five partitions, the above adjustment matrix entries also do not change any $d^{e,p}_{\la \mu}(v)$ in our submatrix, so these blocks are again strictly wild and brick-wild.
In this case, the submatrix is of the form~(\ref{targetmatrixaltsquare2}).
\begin{answer}
\[
\begin{array}{r|ccccc}
&\rt{9,4,1}
&\rt{9,3,2}
&\rt{8,4,2}
&\rt{6^2,2}
&\rt{\mathrlap{6,4^2}\hphantom{\langle (e-1)^2,e-2 \rangle}}
\\\hline
9,4,1&1&\cdot&\cdot&\cdot&\cdot\\
9,3,2&v&1&\cdot&\cdot&\cdot\\
8,4,2&v^2&v&1&\cdot&\cdot\\
6^2,2&\cdot&\cdot&v&1&\cdot\\
6,4^2&\cdot&v&v^2&v&1
\end{array}
\qquad = \qquad
\begin{array}{r|ccccc}
&\rt{\langle e-1,e-1 \rangle}
&\rt{\langle e-1,e-2 \rangle}
&\rt{\langle e-2 \rangle}
&\rt{\langle e-2,e-1 \rangle}
&\rt{\langle (e-1)^2,e-2 \rangle}
\\\hline
\langle e-1,e-1 \rangle&1&\cdot&\cdot&\cdot&\cdot\\
\langle e-1,e-2 \rangle&v&1&\cdot&\cdot&\cdot\\
\langle e-2 \rangle&v^2&v&1&\cdot&\cdot\\
\langle e-2,e-1 \rangle&\cdot&\cdot&v&1&\cdot\\
\langle (e-1)^2,e-2 \rangle&\cdot&v&v^2&v&1
\end{array}
\]
\end{answer}

\item The weight 3 block with core $\kappa_{(e+1,2e+1)} = (4,2^2,1^2)$.
In \cite[Section~5.5]{als23}, when $p\neq 2$ we used the partitions $\langle e-1,e-2 \rangle = (10,4,3,1^2)$, $\langle (e-1)^2,e-2 \rangle = (7,5^2,1^2)$, $\langle (e-1)^3 \rangle = (7,5,3^2,1)$, and $\langle (e-2)^2,e-1 \rangle = (7,4,3^2,2)$.
Assuming $p\neq 2$, we also include the partition $\langle e-1,e-1 \rangle = (10,5,2,1^2)$, yielding a submatrix of the graded decomposition matrix of the form~(\ref{targetmatrixalt}), and also~(\ref{targetmatrixaltsquare2}).
\begin{answer}
\[
\begin{array}{r|ccccc}
&\rt{10,5,2,1^2}
&\rt{10,4,3,1^2}
&\rt{7,5^2,1^2}
&\rt{7,5,3^2,1}
&\rt{\mathrlap{7,4,3^2,2}\hphantom{\langle (e-1)^2,e-2 \rangle}}
\\\hline
10,5,2,1^2&1&\cdot&\cdot&\cdot&\cdot\\
10,4,3,1^2&v&1&\cdot&\cdot&\cdot\\
7,5^2,1^2&\cdot&v&1&\cdot&\cdot\\
7,5,3^2,1&v&v^2&v&1&\cdot\\
7,4,3^2,2&v^2&v&\cdot&v&1
\end{array}
\qquad = \qquad
\begin{array}{r|ccccc}
&\rt{\langle e-1,e-1 \rangle}
&\rt{\langle e-1,e-2 \rangle}
&\rt{\langle (e-1)^2,e-2 \rangle}
&\rt{\langle (e-1)^3 \rangle}
&\rt{\langle (e-2)^2,e-1 \rangle}
\\\hline
\langle e-1,e-1 \rangle&1&\cdot&\cdot&\cdot&\cdot\\
\langle e-1,e-2 \rangle&v&1&\cdot&\cdot&\cdot\\
\langle (e-1)^2,e-2 \rangle&\cdot&v&1&\cdot&\cdot\\
\langle (e-1)^3 \rangle&v&v^2&v&1&\cdot\\
\langle (e-2)^2,e-1 \rangle&v^2&v&\cdot&v&1
\end{array}
\]
\end{answer}
We apply \cref{thm:wt3adj} and see that:
$\langle e-1\rangle$ induces semi-simply to $\langle e-1\rangle$;
$\langle e-2, e-1\rangle$ induces semi-simply to $\langle e-2\rangle$ and $\langle (e-2)^3\rangle$ induces semi-simply to $\langle (e-2)^3\rangle$; 
$\langle (e-1)^2, e-2\rangle$ induces semi-simply to $\langle e-2, e-1\rangle$ and 
$\langle (e-2)^2, e-1\rangle$ induces semi-simply to $\langle (e-2)^2, e-1\rangle$; 
$\langle e-1, e-2\rangle$ induces semi-simply to $\langle e-1, e-2\rangle$ and 
$\langle (e-1)^3\rangle$ induces semi-simply to $\langle (e-1)^2, e-2\rangle$; 
$\langle e-1, e-1\rangle$ induces semi-simply to $\langle e-1, e-1\rangle$, but no partition induces semi-simply to $\langle (e-1)^3 \rangle$; 
$\langle e-2, e-2\rangle$ induces semi-simply to $\langle e-2, e-2\rangle$.
Since our chosen partitions do not include $\langle e-1\rangle$, $\langle e-2, e-1\rangle$, or $\langle e-2, e-2 \rangle$, $a_{\nu\mu} = \delta_{\nu\mu}$ for all $\mu$ among them if $p\neq 2$, and we see that the above submatrix is the same for any $p\neq 2$.
By \cref{prop:matrixtrick}, these blocks are therefore strictly wild and brick-wild.

It remains to check the case $p=2$, in which the above partitions do not give a submatrix that is the same in characteristics $0$ and $2$.
In \cite[Section~5.5]{als23}, we used the four partitions $\langle e-1, e-2 \rangle = (10,4,3,1^2)$, $\langle e-2 \rangle = (9,5,3,1^2)$, $\langle e-2, e-1 \rangle = (7^2,3,1^2)$, and $\langle (e-1)^2, e-2 \rangle = (7,5^2,1^2)$ for $p=2$.
Here we include $\langle e-1, e-1 \rangle = (10,5,2,1^2)$, and by the list above, we may check that our submatrix remains unchanged in characteristic $2$.

To do this, we note that we do not include the partition $\langle e-1\rangle$, but still have
\[
a_{\langle (e-2)^3 \rangle \langle e-2,e-1 \rangle} = a_{\langle (e-1)^3 \rangle \langle e-1, e-2 \rangle} = a_{\langle (e-2)^2, e-1 \rangle \langle (e-1)^2, e-2 \rangle} = 1
\]
Since $\langle (e-2)^3 \rangle$, $\langle (e-1)^3 \rangle$, and $\langle (e-2)^2, e-1 \rangle$ are all dominated by our five partitions, the $d^{e,p}_{\la \mu}(v)$ entries in our submatrix do not change, so these blocks are again strictly wild and brick-wild.
In this case, the submatrix is of the form~(\ref{targetmatrixaltsquare2}).
\begin{answer}
\[
\begin{array}{r|ccccc}
&\rt{10,5,2,1^2}
&\rt{10,4,3,1^2}
&\rt{9,5,3,1^2}
&\rt{7^2,3,1^2}
&\rt{\mathrlap{7,5^2,1^2}\hphantom{\langle (e-1)^2,e-2 \rangle}}
\\\hline
10,5,2,1^2&1&\cdot&\cdot&\cdot&\cdot\\
10,4,3,1^2&v&1&\cdot&\cdot&\cdot\\
9,5,3,1^2&v^2&v&1&\cdot&\cdot\\
7^2,3,1^2&\cdot&\cdot&v&1&\cdot\\
7,5^2,1^2&\cdot&v&v^2&v&1
\end{array}
\qquad = \qquad
\begin{array}{r|ccccc}
&\rt{\langle e-1,e-1 \rangle}
&\rt{\langle e-1,e-2 \rangle}
&\rt{\langle e-2 \rangle}
&\rt{\langle e-2,e-1 \rangle}
&\rt{\langle (e-1)^2,e-2 \rangle}
\\\hline
\langle e-1,e-1 \rangle&1&\cdot&\cdot&\cdot&\cdot\\
\langle e-1,e-2 \rangle&v&1&\cdot&\cdot&\cdot\\
\langle e-2 \rangle&v^2&v&1&\cdot&\cdot\\
\langle e-2,e-1 \rangle&\cdot&\cdot&v&1&\cdot\\
\langle (e-1)^2,e-2 \rangle&\cdot&v&v^2&v&1
\end{array}
\]
\end{answer}

\item The weight 3 block with core $\kappa_{(2e+1,e+1)} = (5,3,1^2)$.
As in \cite[Section~5.5]{als23} (when this block was discussed for $p=2$), it suffices to note that the $\#$-automorphism on $\hhh$ (e.g.~see \cite[Exercise~3.14]{mathas}) induces an isomorphism between this block and the one with core $(5,3,1^2)' = (4,2^2,1^2)$.

\item The weight 3 block with core $\kappa_{(2e+1,2e+1)} = (6,4,2^2,1^2)$.
In \cite[Section~5.5]{als23}, we used the partitions $\langle e-1,e-2 \rangle = (12,4^2,3,1^2)$, $\langle (e-1)^2,e-2 \rangle = (9,7,4,3,1^2)$, $\langle e-2,e-1 \rangle = (9,6,5,3,1^2)$, and $\langle (e-2)^2,e-1 \rangle = (9,4^2,3^2,2)$.
Assuming $p\neq 2$, we also include the partition $\langle (e-1)^3 \rangle = (9,7,5,2,1^2)$, yielding a submatrix of the graded decomposition matrix of the form~(\ref{targetmatrix2}).
\begin{answer}
\[
\begin{array}{r|ccccc}
&\rt{12,4^2,3,1^2}
&\rt{9,7,5,2,1^2}
&\rt{9,7,4,3,1^2}
&\rt{9,6,5,3,1^2}
&\rt{\mathrlap{9,4^2,3^2,2}\hphantom{\langle (e-1)^2,e-2 \rangle}}
\\\hline
12,4^2,3,1^2&1&\cdot&\cdot&\cdot&\cdot\\
9,7,5,2,1^2&\cdot&1&\cdot&\cdot&\cdot\\
9,7,4,3,1^2&\cdot&v&1&\cdot&\cdot\\
9,6,5,3,1^2&v&v^2&v&1&\cdot\\
9,4^2,3^2,2&v&\cdot&v&\cdot&1
\end{array}
\qquad = \qquad
\begin{array}{r|ccccc}
&\rt{\langle e-1,e-2 \rangle}
&\rt{\langle (e-1)^3 \rangle}
&\rt{\langle (e-1)^2,e-2 \rangle}
&\rt{\langle e-2,e-1 \rangle}
&\rt{\langle (e-2)^2,e-1 \rangle}
\\\hline
\langle e-1,e-2 \rangle&1&\cdot&\cdot&\cdot&\cdot\\
\langle (e-1)^3 \rangle&\cdot&1&\cdot&\cdot&\cdot\\
\langle (e-1)^2,e-2 \rangle&\cdot&v&1&\cdot&\cdot\\
\langle e-2,e-1 \rangle&v&v^2&v&1&\cdot\\
\langle (e-2)^2,e-1 \rangle&v&\cdot&v&\cdot&1
\end{array}
\]
\end{answer}
If $p=2$ the block with core $(6,4,2^2,1^2)$ (i.e.~the Rouquier block) is not susceptible to our methods when $e=3$, and we must treat it separately.

For now we will assume that $e\geq 4$.
As in \cite[Section~5.5]{als23}, we may reduce the problem to ten Scopes classes when $e=4$, and note that eight of them are amenable to a further runner removal to land in one of the previously handled cases.
The remaining two blocks, in the notation of loc.~cit., have cores $\kappa_{2e+1, 2e+1, e+1} = (11,8,5^2,3^2,1^3)$ and $\kappa_{2e+1, 2e+1, 2e+1} = (12,9,6^2,4^2,2^3,1^3)$.
In both cases, we used the partitions $\langle (e-1)^2, e-2 \rangle$, $\langle (e-1)^2, e-3 \rangle$, $\langle (e-2)^2, e-1 \rangle$, and $\langle e-3, e-2, e-1 \rangle$ in \cite[Section~5.5]{als23} to show that these blocks are brick-infinite.
In both cases, we include $\langle e-1, e-1\rangle$ to yield a submatrix of the form~(\ref{targetmatrix}) which is identical in characteristics 0 and 2, since we avoid all partitions of the form $\langle i \rangle$ and $\langle i,k \rangle$ (which each induce semi-simply to $\langle i \rangle$ and $\langle i, k \rangle$, respectively).
\begin{answer}
{\scalefont{.96}
\begin{align*}
\begin{array}{r|ccccc}
&\rt{19,12,5^2,3^2,1^3}
&\rt{15,12,8,6,3^2,1^3}
&\rt{15,12,5^2,3^3,2^2}
&\rt{15,8^2,6^2,4,1^3}
&\rt{15,8^2,6,3^3,2^2}
\\\hline
19,12,5^2,3^2,1^3&1&\cdot&\cdot&\cdot&\cdot\\
15,12,8,6,3^2,1^3&v&1&\cdot&\cdot&\cdot\\
15,12,5^2,3^3,2^2&\cdot&v&1&\cdot&\cdot\\
15,8^2,6^2,4,1^3&v^2&v&\cdot&1&\cdot\\
15,8^2,6,3^3,2^2&\cdot&v^2&v&v&1
\end{array}
\qquad &= \qquad
\begin{array}{r|ccccc}
&\rt{\langle e-1,e-1 \rangle}
&\rt{\langle (e-1)^2,e-2 \rangle}
&\rt{\langle (e-1)^2,e-3 \rangle}
&\rt{\langle (e-2)^2,(e-1) \rangle}
&\rt{\mathrlap{\langle e-3,e-2,e-1 \rangle}\hphantom{15,12,8,6,3^2,1^3}}
\\\hline
\langle e-1,e-1 \rangle&1&\cdot&\cdot&\cdot&\cdot\\
\langle (e-1)^2,e-2 \rangle&v&1&\cdot&\cdot&\cdot\\
\langle (e-1)^2,e-3 \rangle&\cdot&v&1&\cdot&\cdot\\
\langle (e-2)^2,(e-1) \rangle&v^2&v&\cdot&1&\cdot\\
\langle e-3,e-2,e-1 \rangle&\cdot&v^2&v&v&1
\end{array}\\
\\
\qquad &= \qquad
\begin{array}{r|ccccc}
&\rt{20,13,6^2,4^2,2^3,1^3}
&\rt{16,13,9,7,4^2,2^3,1^3}
&\rt{16,13,6^2,4^3,3^2,1^3}
&\rt{16,9^2,7^2,5,2^3,1^3}
&\rt{16,9^2,7,4^3,3^2,1^3}
\\\hline
20,13,6^2,4^2,2^3,1^3&1&\cdot&\cdot&\cdot&\cdot\\
16,13,9,7,4^2,2^3,1^3&v&1&\cdot&\cdot&\cdot\\
16,13,6^2,4^3,3^2,1^3&\cdot&v&1&\cdot&\cdot\\
16,9^2,7^2,5,2^3,1^3&v^2&v&\cdot&1&\cdot\\
16,9^2,7,4^3,3^2,1^3&\cdot&v^2&v&v&1
\end{array}
\end{align*}
}
\end{answer}
\end{enumerate}

Finally, we return to the Rouquier block when $e=3$ and $p=2$.
In \cite[Section~5.5]{als23}, we used the decomposition matrix along with some homomorphisms between Specht modules to determine that the Gabriel quiver contained a copy of $A_3^{(1)}$ with vertices given by $\la_1 := (15,4,2^2,1^2)$, $\la_3 := (12,4^2,3,1^2)$, $\la_4 := (9,7,5,2,1^2)$, and $\la_5 := (9,7,4,3,1^2)$.
But in the process, we also used the simple Specht module indexed by $\la_2 := (12,7,2^2,1^2)$, showing that $\Rad(\spe{\la_3}) \cong \D{\la_1} \oplus \D{\la_2}$.
In particular, $\D{\la_2}$ extends $\D{\la_3}$, so that we actually have a copy of $A_3^{(1)\wedge}$ in the Gabriel quiver, and so by \cref{cor:subquiver}, these blocks are strictly wild and brick-wild.

\section{Blocks of weight at least 4}\label{sec:highwt}

Since we will deal with blocks of arbitrarily large weight, we change our conventions for partitions, switching to use $e$-quotient notation, ordered by the fixed integers $p_i$, as in \cite[Section~6]{als23}.
The integers $p_i$ are determined by our fixed core, $\rho$, and we will consider the weight $w$ block $B(\rho,w)$ whose $e$-core is $\rho$.

Ordering the components in the $e$-quotient starting with the runner containing the $p_{e-1}$ position, and then the runner containing the $p_{e-2}$ position, and so on until the $p_0$ runner, we may read a partition from each runner, considered as a $1$-runner abacus display.
This uniquely determines an $e$-quotient for any partition in $B(\rho,w)$ (i.e.~it is not only defined up to some cyclic permutation, since we are fixing our order, and is independent of the chosen $r$).
We will use the shorthand notation $\vn^k$ to denote a string of $k$ components, each equal to the empty partition, in the $e$-quotient notation for a partition $\la$.
See \cite[Section~6]{als23} for illustrative examples of $e$-quotients.

\begin{lem}\label{lem:wt2rouquish}
Suppose $e\geq 4$ and $\rho$ is a core satisfying $p_{e-1} - p_{e-2} >e$.
Define five partitions as follows.
\[
\la^{(0)} = ((1^2),\vn^{e-1}), \qquad \la^{(1)} = ((1),(1),\vn^{e-2}), \qquad \la^{(2)} = ((1),\vn,(1),\vn^{e-3}).
\]
\begin{enumerate}[label=(\roman*)]
\item
If $p_{e-2} - p_{e-3} <e$, then define 
\[
\la^{(3)} = (\vn,(2),\vn^{e-2}), \qquad \la^{(4)} = (\vn,\vn,(2),\vn^{e-3}).
\]

\item
If $p_{e-2} - p_{e-3} >e$, then define 
\[
\la^{(3)} = (\vn,(1^2),\vn^{e-2}), \qquad \la^{(4)} = (\vn,(1),(1),\vn^{e-3}).
\]
\end{enumerate}
Then the five partitions $\la^{(0)}$, $\la^{(1)}$, $\la^{(2)}$, $\la^{(3)}$, and $\la^{(4)}$ give a submatrix of the graded decomposition matrix which is identical in any characteristic, and is of the form~(\ref{targetmatrix}).
It follows from \cref{prop:matrixtrick} that $B(\rho,2)$ is strictly wild.
\end{lem}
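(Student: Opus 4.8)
The plan is to reduce the statement to the concrete computations of \cref{sec:wt2} by first rewriting the five partitions in the weight-$2$ notation used there. Reading each $e$-quotient component off as a one-runner abacus display, one checks that $\la^{(0)} = \langle (e-1)^2 \rangle$, $\la^{(1)} = \langle e-2, e-1 \rangle$ and $\la^{(2)} = \langle e-3, e-1 \rangle$, while in case (i) we have $\la^{(3)} = \langle e-2 \rangle$ and $\la^{(4)} = \langle e-3 \rangle$, and in case (ii) we have $\la^{(3)} = \langle (e-2)^2 \rangle$ and $\la^{(4)} = \langle e-3, e-2 \rangle$; all five are $e$-regular. In case (ii) these are exactly the partitions, listed in the same order, treated in \cref{subsec:wt2fifthcase}, where \cref{thm:runnerrem} was used to reduce to an explicit small block and the LLT algorithm produced a characteristic-$0$ submatrix of the form~(\ref{targetmatrix}); here I would simply quote that computation. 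In case (i) I would run the analogous reduction: since only runners $e-3, e-2, e-1$ are involved, apply \cref{thm:runnerrem} to delete the remaining runners and land in a small block, then compute the $5\times 5$ characteristic-$0$ graded decomposition matrix by LLT, checking that it has shape~(\ref{targetmatrix}), i.e.~that the entries in positions $(2,1)$, $(3,2)$, $(4,2)$, $(5,3)$ and $(5,4)$ are exactly $v$, with $1$'s on the diagonal and $\cdot$'s above it.

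Next I would show the submatrix is the same in every characteristic. For $p \geq 3$ the weight-$2$ adjustment matrix is trivial by \cite{richardswt2}, so only $p=2$ requires attention, where I would appeal to \cref{thm:wt2adjust}. By that formula, a column $\mu$ of our submatrix can acquire a nonzero correction $\sum_{\nu \domsby \mu} d^{e,0}_{\la\nu}(v)\, a_{\nu\mu}(v)$ only if $\mu = \langle i \rangle$ or $\mu = \langle i, i+1 \rangle$ and the associated gap conditions hold. In case (i) the columns of these forms are $\langle e-2 \rangle$, $\langle e-3 \rangle$ and $\langle e-2, e-1 \rangle$, and for each the nontrivial adjustment entry requires $p_{e-2} - p_{e-3} > e$, contradicting the hypothesis $p_{e-2} - p_{e-3} < e$. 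In case (ii) the relevant columns are $\langle e-2, e-1 \rangle$ and $\langle e-3, e-2 \rangle$, whose corrections require $p_{e-1} - p_{e-2} < e$ and $p_{e-2} - p_{e-3} < e$ respectively, each contradicting the standing hypotheses. Hence $a_{\nu\mu}(v) = \delta_{\nu\mu}$ for every column $\mu$ of the submatrix, so every entry is unchanged in characteristic $2$; in particular $d^{e,p}_{\la\mu}(1) = d^{e,0}_{\la\mu}(1)$ for the relevant $\la, \mu$.

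With a characteristic-independent submatrix of the form~(\ref{targetmatrix}) in hand, the hypotheses of \cref{prop:matrixtrick} are satisfied, and I would conclude that $B(\rho,2)$ is strictly wild, and therefore brick-wild. The only genuine labour is the case-(i) LLT computation of the characteristic-$0$ matrix after runner removal; the passage from $e$-quotients to $\langle\,\cdot\,\rangle$-notation, the verification that \cref{thm:runnerrem} applies (the last bead on each deleted runner lying before every gap), and the vanishing of the adjustment entries are all routine bookkeeping, closely parallel to \cref{subsec:wt2thirdcase,subsec:wt2fifthcase}. The main point to watch is that in case (ii) the reduced block is of the type handled in \cref{subsec:wt2fifthcase} — lying in the delicate $e=3$ Rouquier Scopes class upon further reduction — so one must be sure, as the adjustment check confirms, that avoiding every partition of the form $\langle i \rangle$ keeps the relevant part of the matrix characteristic-free.
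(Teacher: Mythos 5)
Your proposal is correct and follows essentially the same route as the paper: translate the $e$-quotients into the weight-$2$ bracket notation, reduce via runner removal to a small block where the characteristic-$0$ matrix is computed by LLT (with case (ii) being literally the computation of \cref{subsec:wt2fifthcase}), and then use the triviality of the weight-$2$ adjustment matrix for $p\geq 3$ together with \cref{thm:wt2adjust} for $p=2$ to see the submatrix is characteristic-free. Your column-by-column verification that the gap conditions in \cref{thm:wt2adjust} fail for every column of the submatrix is in fact slightly more careful than the paper's one-line remark on this point.
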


\begin{proof}
The result follows almost immediately from \cite[Lemma~6.1]{als23}.
There, we used runner removal to reduce the computation down to $e=4$, and may do the same here.
We have added the partition $\la^{(0)} = ((1^2),\vn^{e-1})$, and so must check that in each Scopes class -- which have cores $(3)$, $(4,1^2)$, and $(4,1^3)$ satisfying $p_{e-2} - p_{e-3} <e$, and cores $(5,2^2)$ and $(6,3^2,1^3)$ satisfying $p_{e-2} - p_{e-3} >e$ -- we get the following matrix, of the form~(\ref{targetmatrix}), in any characteristic.
\begin{equation*}
\begin{pmatrix}
1&\cdot&\cdot&\cdot&\cdot\\
v&1&\cdot&\cdot&\cdot\\
\cdot&v&1&\cdot&\cdot\\
\cdot&v&\cdot&1&\cdot\\
\cdot&v^2&v&v&1
\end{pmatrix}
\end{equation*}

We may easily verify the result in characteristic $0$, for example by the LLT algorithm.
Since $B(\rho,2)$ is a weight 2 block, the result now follows for $p\geq 3$, as in \cref{sec:wt2}.
By applying \cref{thm:wt2adjust}, we may check that these submatrices are in fact identical when $p=2$, since $a_{\nu \mu}(v) = \delta_{\nu\mu}$ is only nonzero for $\mu = ((2),\vn^{e-1})$ or $(\vn,(2),\vn^{e-2})$, neither of which we have used among our chosen partitions.
\end{proof}

When $e=3$, we will also need to consider Scopes classes of weight $4$ blocks explicitly, and will encode a core $\rho$ by a triple $[s_0,s_1,s_2]$ encoding the number of beads on each runner of a truncated abacus display for $\rho$, as in \cite[Section~6]{als23}.

\begin{thm}\label{thm:mostblocks}
Let $e\geq 3$, and $w\geq 4$, and let $\rho$ be an $e$-core.
Unless $e=3$, $p_{e-1} - p_{e-2}>2e$, and  $p_{e-2} - p_{e-3} > e$, the weight $w$ block $B(\rho,w)$ of $\hhh$ is strictly wild.
\end{thm}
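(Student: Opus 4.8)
The plan is to reduce the general weight $w\geq 4$ block down to the weight $2$ and weight $3$ blocks already handled in the previous sections, exploiting row-removal (\cref{thm:rowrem}) in $e$-quotient notation together with the weight-$2$ result of \cref{lem:wt2rouquish}. The overarching idea is this: if I can find a partition $\la^{(0)}$ with a suitable first part (or first few rows, in the language of $e$-quotients) that is shared by a family of partitions spanning one of the target submatrices $(\dag)$--$(\spadesuit)$ of \cref{prop:matrixtrick}, then removing that common row reduces the relevant graded decomposition numbers to those of a block of strictly smaller weight. Iterating, I descend to weight $2$ or weight $3$, where the needed submatrices have already been exhibited and shown to be characteristic-free. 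The key point is that \cref{thm:rowrem} preserves graded decomposition numbers in characteristic $0$ \emph{and} the ungraded decomposition numbers $d^{e,p}_{\la\mu}(1)$ in characteristic $p$, which is exactly the pair of hypotheses demanded by \cref{prop:matrixtrick}.

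Concretely, I would split into cases according to the runner gaps $p_{e-1}-p_{e-2}$ and $p_{e-2}-p_{e-3}$, mirroring the case divisions of \cref{sec:wt2,sec:wt3}. In the generic situation where the core $\rho$ satisfies $p_{e-1}-p_{e-2}>e$, I would take the five weight-$w$ partitions whose $e$-quotients agree with those of \cref{lem:wt2rouquish} in their first two components and carry $w-2$ extra boxes placed in a fixed empty component far down the abacus, so that row-removal (applied $w-2$ times, peeling off the rows coming from that added component) strips these back to exactly the weight-$2$ partitions $\la^{(0)},\dots,\la^{(4)}$ of \cref{lem:wt2rouquish}. Since that lemma produces a submatrix of form $(\dag)$ identical in all characteristics, and since row-removal preserves both $d^{e,0}_{\la\mu}(v)$ and $d^{e,p}_{\la\mu}(1)$, the same submatrix appears for the weight-$w$ block, and \cref{prop:matrixtrick} applies. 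For the remaining cases I would instead reduce to the weight-$3$ blocks of \cref{sec:wt3}, choosing partitions whose quotients restrict appropriately; the excluded case $e=3$ with $p_{e-1}-p_{e-2}>2e$ and $p_{e-2}-p_{e-3}>e$ is precisely the one that reduces (via runner removal) to the troublesome $e=3$ weight-$2$ Rouquier block, which is why it must be omitted here and absorbed into the separate analysis of \cref{thm:MAIN}.

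The main obstacle I anticipate is \emph{bookkeeping the characteristic-$p$ invariance} rather than the characteristic-$0$ computation. In characteristic $0$ the LLT algorithm, combined with the runner-removal theorem (\cref{thm:runnerrem}) and row-removal (\cref{thm:rowrem}), makes the graded submatrices routine to verify. The delicate part is confirming that the adjustment-matrix contributions vanish on the rows of the chosen submatrix, i.e.\ that $d^{e,p}_{\la\mu}(1)=d^{e,0}_{\la\mu}(1)$ for our chosen $\la,\mu$. For weight $w\geq 4$ there is no clean closed-form description of the adjustment matrix analogous to \cref{thm:wt2adjust} or \cref{thm:wt3adj}; however, because row-removal reduces $d^{e,p}_{\la\mu}(1)$ all the way down to the corresponding weight $\leq 3$ block, where the adjustment behaviour \emph{is} fully controlled by those two theorems, the characteristic-$p$ equality is inherited. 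I would therefore prove the invariance at the reduced (weight $2$ or $3$) level and lift it back via \cref{thm:rowrem}, being careful that the partitions I select do not involve any $\la=\langle i\rangle$-type (respectively $\langle i\rangle$, $\langle i,k\rangle$-type) quotients that carry nontrivial adjustment entries—exactly the avoidance already built into \cref{lem:wt2rouquish} and the weight-$3$ arguments. The genuinely new verification is simply checking that the added rows survive row-removal intact, which is a finite check in each of the finitely many Scopes classes arising when $e$ is reduced to $3$ or $4$.
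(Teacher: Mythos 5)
Your overall strategy is the paper's: a case division on the runner gaps, a choice of five weight-$w$ partitions sharing a common first row that carries the extra $w-2$ boxes (in the paper, the first part of the first $e$-quotient component is $w-2$ throughout), and an application of \cref{thm:rowrem} to drop to a weight $2$ block where the submatrix has already been verified. So the skeleton is sound. Two small points of detail: the paper reduces everything in this theorem to weight $2$, never to weight $3$, and the reduction is a \emph{single} row removal rather than $w-2$ of them, since the extra boxes are placed by sliding the \emph{lowest} bead down $w-2$ spaces, producing one long common first row.

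There is, however, one step in your plan that fails as stated, namely ``prove the invariance at the reduced (weight $2$) level and lift it back.'' In the configurations whose weight-$2$ reduction lands in \cref{subsec:wt2thirdcase} or \cref{subsec:wt2fourthcase}, the weight-$2$ submatrix is \emph{not} characteristic-free in characteristic $2$ --- \cref{thm:wt2adjust} produces a genuine adjustment in the column indexed by $\langle e-1 \rangle$ or $\langle e-2,e-1\rangle$ --- and the weight-$2$ argument there is an $\Ext^1$ computation via \cref{thm:wt2ext}, which is not a statement about decomposition numbers and does not transport through \cref{thm:rowrem}. So for $p=2$ you cannot simply inherit the weight-$2$ conclusion; you must choose a \emph{different} set of five partitions whose reduced submatrix avoids the adjusted columns. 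The paper does this by switching, for $p=2$, to the partitions of \cref{lem:wt2rouquish}, which requires $e\geq 4$; the leftover case $e=3$, $p=2$, $p_{e-1}-p_{e-2}>2e$, $p_{e-2}-p_{e-3}<e$ is then handled by a separate trick, conjugating the core to land in a Scopes class satisfying $p_{e-1}-p_{e-2}<e$ and $p_{e-2}-p_{e-3}>2e$, where the tabulated partitions do apply. Without these two extra ideas your argument does not close in characteristic $2$.
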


\begin{proof}
As in \cite[Theorem~6.2]{als23}, we tabulate the different possible cases and the necessary partitions to use in each.
Our table is almost identical to \cite[Table~1]{als23}, except that 1) the sixth row of that table doesn't work in our setting, so is omitted; 2) the final row of that table must be split into two separate cases for us, depending on whether $p_{e-1}-p_{e-3}<2e$ or $p_{e-1}-p_{e-3}>2e$; and 3) there was a typo in the case where $p_{e-1} - p_{e-3}<e$, $p_{e-1}-p_{e-4}>e$, where the partition $((w-2,1),\vn^{e-1})$ should have been $((w-2,2),\vn^{e-1})$, and we have corrected it accordingly.
The extra partition which we add in each case compared to \cite[Table~1]{als23} is included in the final column, although how it compares to the others in the dominance order varies from case to case.
Our results are given in Table \ref{tab1}, where the following shorthand is used: 
\begin{align*}
&\alpha \text{ denotes } p_i -p_j <e, && \beta \text{ denotes } e<p_i -p_j <2e, \\
 &\gamma \text{ denotes } 2e < p_i-p_j,&& \delta \text{ denotes } e< p_i -p_j,\\
 & \varepsilon \text{ denotes } p_i -p_j < 2e.
\end{align*}

As in loc.~cit., all cases in the table involve sliding the lowest bead on the abacus display for $\rho$ down $w-2$ spaces on each partition, and row removal -- \cref{thm:rowrem} -- reduces the calculation to the weight 2 case, which then follows by earlier work in \cref{sec:wt2,lem:wt2rouquish}.

If $p_{e-1} - p_{e-2}>2e$ and $p_{e-2} - p_{e-3}<e$, then the third row of the table handles this case for $p\neq2$, while the fourth row handles it for $p=2$ provided that $e\geq 4$.
To resolve the missing case $e=3$, $p=2$, we note that each such block is Scopes equivalent either to the block $B([1,1,s_2],w)$ for $s_2\geq 3$ or to the block $B([1,s_1,1],w)$ for $s_1\geq 4$.
But these blocks are Morita equivalent, by conjugation, to $B([1,s_1,s_1 -1],w)$ and $B([1,s_2,s_2],w)$, respectively.
But these blocks all satisfy $p_{e-1}-p_{e-2} <e$ and $p_{e-2}-p_{e-3}>2e$, so are strictly wild by the final row of the table.\qedhere

\afterpage{%
    \clearpage
    \thispagestyle{empty}
\begin{landscape}
\begin{center}
\renewcommand{\arraystretch}{1.2}
\[\begin{array}{|m{10pt}m{10pt}m{10pt}|m{15pt}m{15pt}|m{10pt}m{10pt}m{10pt}m{10pt}|m{15pt}m{15pt}|lll|ll|l} \hline
\multicolumn{3}{|c|}{p_{e-1}{-}p_{e-2}} & \multicolumn{2}{c|}{p_{e-2}{-}p_{e-3}} & 
\multicolumn{4}{c|}{p_{e-1}{-}p_{e-3}} & \multicolumn{2}{c|}{p_{e-1}{-}p_{e-4}} & \multicolumn{5}{c|}{} \\ 
$\alpha$ &$\beta$&$\gamma$ &\multicolumn{1}{c}{\alpha} &\multicolumn{1}{c|}{\delta} & $\alpha$ &  $\beta$& $\gamma$ & $\varepsilon$ & \multicolumn{1}{c}{\alpha} & \multicolumn{1}{c|}{\delta} & \multicolumn{5}{c|}{} \\ \hline 
&$\bullet$&&\multicolumn{1}{c}{\bullet}&&&$\bullet$&&&&&((w-2,2), \vn^{e-1}) & ((w-2),(2),\vn^{e-2}) & ((w-2,1),(1),\vn^{e-2}) & \text{(\ref{targetmatrix})} &\text{\cref{subsec:wt2firstcase}} \\
&&&&&&&&&&&((w-2),\vn,(2),\vn^{e-3}) & ((w-2,1),\vn,(1),\vn^{e-3}) &&& \\ \hline 
&$\bullet$&&\multicolumn{1}{c}{\bullet} &&&&$\bullet$&&&&((w-2,2), \vn^{e-1}) & ((w-2),(2),\vn^{e-2}) &((w-2),\vn,(2),\vn^{e-3})& \text{(\ref{targetmatrix2})} &\text{\cref{subsec:wt2secondcase}}\\
&&&&&&&&&&&((w-2,1),(1),\vn^{e-2}) & ((w-2,1^2),\vn^{e-1}) &&&\\ \hline 
&&$\bullet$&\multicolumn{1}{c}{\bullet}&&&&&&& &((w-2,2), \vn^{e-1}) & ((w-2,1),(1),\vn^{e-2}) &((w-2,1^2),\vn^{e-1})& \text{(\ref{targetmatrix3})} &\text{\cref{subsec:wt2thirdcase}} \hspace{12pt} p\neq2\\
&&&&&&&&&&&((w-2),(2),\vn^{e-2}) & ((w-2),\vn,(2),\vn^{e-3}) &  && \\ \hline 
&&$\bullet$&\multicolumn{1}{c}{\bullet}&&&&&&& &((w-2,1),(1), \vn^{e-2}) & ((w-2,1),\vn,(1),\vn^{e-3}) &((w-2,1^2),\vn^{e-1})&\text{(\ref{targetmatrix})} & \text{\cref{lem:wt2rouquish}} \hspace{28pt} p=2  \\
&&&&&&&&&&&((w-2),(2),\vn^{e-2}) & ((w-2),\vn,(2),\vn^{e-3}) & &&\hspace{82pt} e \geq 4 \\ \hline 
&$\bullet$ & & &$\bullet$ &&&&&&& ((w-2,2),\vn^{e-1}) & ((w-2),(2),\vn^{e-2}) &((w-2),(1^2),\vn^{e-2})& \text{(\ref{targetmatrixalt})} &\text{\cref{subsec:wt2fourthcase}}\\
&&&&&&&&&&&((w-2,1),(1),\vn^{e-2}) & ((w-2,1^2),\vn^{e-1}) &&& \\ \hline
&&$\bullet$ &&$\bullet$ &&&&&&& ((w-2,1),(1),\vn^{(e-2)}) & ((w-2,1),\vn,(1),\vn^{e-3}) &((w-2,1^2),\vn^{e-1})&  \text{(\ref{targetmatrix})} &\text{\cref{lem:wt2rouquish}} \hspace{27pt} e \geq 4\\
&&&&&&&&&&&((w-2),(1^2),\vn^{e-2}) & ((w-2),(1),(1),\vn^{e-3}) & &&\\ \hline
&&&&&&&&&\multicolumn{1}{c}{\bullet} && ((w-2),(2),\vn^{e-2}) & ((w-2),\vn,(2),\vn^{e-3}) &((w-2),(1),(1),\vn^{e-3})& \text{(\ref{targetmatrix})} &\text{\cref{subsec:wt2firstcase}} \\
&&&&&&&&&&&((w-2),\vn^2,(2),\vn^{e-4}) & ((w-2),(1),\vn,(1),\vn^{e-4}) &&& \\ \hline
&&&&& $\bullet$&&&&& \multicolumn{1}{c|}{\bullet} & ((w-2),(2),\vn^{e-2}) & ((w-2),\vn,(2),\vn^{e-3}) &((w-2),(1),(1),\vn^{e-3})&  \text{(\ref{targetmatrix})} &\text{\cref{subsec:wt2firstcase}}\\
&&&&&&&&&&&((w-2,2),\vn^{e-1}) & ((w-2,1),(1),\vn^{e-2}) &&& \\ \hline 
$\bullet$ &&&\multicolumn{1}{c}{\bullet} &&& $\bullet$ &&&&& ((w-2),(2),\vn^{e-2}) &((w-2,2),\vn^{e-1}) &((w-2,1),(1),\vn^{e-2})&  \text{(\ref{targetmatrix})} &\text{\cref{subsec:wt2firstcase}}\\
&&&&&&&&&&& ((w-2),\vn,(2),\vn^{e-3}) & ((w-2),(1),(1),\vn^{e-3}) &&& \\ \hline 
$\bullet$ &&& & $\bullet$ && &&$\bullet$& &&((w-2),(2),\vn^{e-2}) & ((w-2,2),\vn^{e-1}) &((w-2),\vn,(2),\vn^{e-3})&  \text{(\ref{targetmatrix2})} &\text{\cref{subsec:wt2secondcase}} \\ 
&&&&&&&&&&&((w-2,1),(1),\vn^{e-2}) & ((w-2),(1^2),\vn^{e-3}) &&&  \\ \hline
$\bullet$ &&& & $\bullet$ && &$\bullet$& &&&((w-2),(2),\vn^{e-2}) & ((w-2,2),\vn^{e-1}) &((w-2,1^2),\vn^{e-1})&  \text{(\ref{targetmatrixalt})} &\text{\cref{subsec:wt2fourthcase}} \\ 
&&&&&&&&&&&((w-2,1),(1),\vn^{e-2}) & ((w-2),(1^2),\vn^{e-3}) &&&  \\ \hline 
\end{array}\]
\captionof{table}{A case-by-case analysis of \cref{thm:mostblocks}} \label{tab1}
\end{center}
\end{landscape}
    \clearpage
}
\end{proof}

It now only remains to show that the blocks $B(\rho,w)$ for $e=3$, $w\geq4$, $p_{e-1} - p_{e-2} > 2e$, and  $p_{e-2} - p_{e-3} > e$ are strictly wild.
We will first deal with the weight 4 blocks, and note that these fall into the following six Scopes classes.

\begin{alignat*}{3}
&[ 1, 2, 4 ], \qquad &&[ 1, 2, 5 ], \qquad &&[ 1, 3, 5 ]\\
&[ 1, 3, 6 ], &&[ 1, 4, 6 ], &&[ 1, 4, 7 ].
\end{alignat*}

Since the Scopes classes $[1,2,4]$ and $[1,2,5]$ are conjugate to the classes $[1,3,4]$ and $[1,4,5]$, respectively, and these classes are covered by \cref{thm:mostblocks}, it immediately follows that the corresponding blocks are strictly wild, and we may focus on the remaining four blocks.

\begin{prop}\label{prop:Rouqblocks}
Let $B = B([1,4,7],4)$ be the Rouquier block of weight $4$, and define the following partitions in $B$ by their $3$-quotients.

If $p\neq 3$, then we set 
\begin{alignat*}{3}
\la^{(0)} &= ((2,1),(1),\vn), \qquad &\la^{(1)} &= ((1^{2}),(1^2),\vn),\qquad 
&\la^{(2)} &= ((1),(2,1),\vn),\\
\la^{(3)} &= ((1),(1^3),\vn), &
\la^{(4)} &= (\vn,(2,1^2),\vn).
\end{alignat*}
Then the partial decomposition matrix corresponding to these five partitions is identical in characteristic $0$ and characteristic $p$ and is of the form~(\ref{targetmatrix}), and hence $B$ is strictly wild.

If $p=3$, then we set
\begin{alignat*}{3}
\la^{(1)} &= ((2^2),\vn^2), \qquad &\la^{(2)} &= ((2,1^2),\vn^2),\qquad 
&\la^{(3)} &= ((2,1),(1),\vn),\\
\la^{(4)} &= ((2),(2),\vn), &
\la^{(5)} &= ((2),(1^2),\vn), &\la^{(6)}&=((1),(1^3),\vn).
\end{alignat*}
Then the partial decomposition matrix corresponding to these six partitions is identical in characteristic $0$ and characteristic $3$ and is equal to~(\ref{targetmatrixstar}), and hence $B$ is strictly wild.
\end{prop}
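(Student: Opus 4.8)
The plan is to reuse the template of \cref{sec:wt2,sec:wt3}: compute the relevant graded decomposition numbers in characteristic $0$, check that the listed partitions produce a submatrix of the advertised shape, show that this submatrix is unchanged in the relevant positive characteristic, and conclude with \cref{prop:matrixtrick}. The new feature is that $B=B([1,4,7],4)$ has weight $4$, so there is no analogue of \cref{thm:wt2adjust} or \cref{thm:wt3adj} to hand; and because $B$ is the Rouquier block its abacus is maximally spread, so \cref{thm:runnerrem} cannot be used to reduce $e$. I would therefore lean on the special structure of Rouquier blocks throughout.

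For the characteristic $0$ input, I would compute $d^{3,0}_{\la\mu}(v)$ for the listed partitions directly, either by the LLT algorithm as elsewhere in the paper, or via the closed combinatorial formula for Rouquier-block decomposition numbers (James--Mathas, Leclerc--Miyachi, Chuang--Tan), which reads each $d^{3,0}_{\la\mu}(v)$ off the three $3$-quotient components as a product of Littlewood--Richardson-type coefficients with an explicit $v$-grading. A direct check then confirms that the five partitions in the $p\neq3$ case give a matrix of the form~(\ref{targetmatrix}), and that the six partitions in the $p=3$ case give exactly~(\ref{targetmatrixstar}).

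The crux is the passage to positive characteristic, i.e.\ controlling the adjustment matrix of this weight $4$ Rouquier block. The guiding principle is that a nonzero off-diagonal entry $a_{\nu\mu}(v)$ of a Rouquier block arises only from ``local'' adjustment on the individual runners, governed by the decomposition numbers of the symmetric groups $\sss[d]$ (equivalently the Schur algebras $S_q(d,d)$) for $d\leq w=4$ distributed according to the $3$-quotient. For $p\geq5$ we have $w<p$, the adjustment is trivial (this instance of James' conjecture being known for weight $4$), and the submatrix coincides with its characteristic $0$ form automatically; the real work is therefore in characteristics $2$ and $3$. As in the weight $2$ and $3$ arguments, I would list the finitely many pairs $(\nu,\mu)$ with $a_{\nu\mu}(v)\neq\delta_{\nu\mu}$ in the relevant characteristic and check that, for each such $\mu$ lying among or dominated by our partitions, the correction $\sum_{\nu\domsby\mu}d^{3,0}_{\la\nu}(v)\,a_{\nu\mu}(v)$ leaves every entry of the submatrix untouched --- this is exactly the requirement that our partitions avoid being the target of a contaminating adjustment entry.

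This is also where the split into the two characteristic regimes originates, and the verification in the two bad characteristics is the step I expect to be the main obstacle. In characteristic $3$, a runner carrying weight $3$ acquires a nontrivial local adjustment reflecting the characteristic-$3$ decomposition numbers of $\sss[3]$, which would corrupt the form~(\ref{targetmatrix}); one therefore passes to the six partitions of the $p=3$ case, arranged to realise an over-extended $D_4^{(1)\wedge}$ configuration, namely matrix~(\ref{targetmatrixstar}), which remains characteristic-free in characteristic $3$. In characteristic $2$, by contrast, the five partitions of the $p\neq3$ case are chosen to avoid every char-$2$ adjustment target, so~(\ref{targetmatrix}) persists. In either regime, once the submatrix is shown to agree with its characteristic $0$ counterpart and to have the stated shape, \cref{prop:matrixtrick} gives that $B$ is strictly wild, and hence brick-wild. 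Pinning down exactly which adjustment entries occur for this weight $4$ Rouquier block in characteristics $2$ and $3$, and confirming that they miss our partitions, is the technical heart of the argument.
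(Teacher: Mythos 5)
Your proposal follows essentially the same route as the paper: characteristic-$0$ entries via the LLT algorithm (or the closed Rouquier-block formulas), triviality of the weight-$4$ adjustment matrix away from $p=2,3$ by Fayers, and then control of the adjustment matrix in the two bad characteristics using the runner-local structure of Rouquier blocks, with the $p=3$ case requiring the six-partition $D_4^{(1)\wedge}$ configuration precisely because a runner of weight $3$ picks up a nontrivial local adjustment. The one step you flag as the remaining obstacle is closed in the paper by citing that $a_{\nu\mu}(v)=0$ unless the $3$-quotient components of $\nu$ and $\mu$ have equal sizes (which leaves a single candidate $\nu=((1^3),(1),\vn)$ for $p=2$, killed by a Jantzen sum formula computation showing $d^{3,2}_{\nu\la^{(0)}}(v)=0$), and for $p=3$ by the exact determination of the adjustment matrix available when $e=p$, under which the only nonzero entries $a_{\nu\la^{(i)}}(v)$ have $d^{e,0}_{\la^{(j)}\nu}(v)=0$ for all $j$ and hence do not disturb the submatrix.
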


\begin{proof}
For $p\neq 3$, the proof proceeds much the same as that of \cite[Proposition~6.4]{als23}, noting that by \cite{fay07wt4}, the adjustment matrix for weight 4 blocks is trivial unless $p=2$ or $3$.
Since we already found the submatrix indexed by $\la^{(1)}, \la^{(2)}, \la^{(3)},\la^{(4)}$ in \cite{als23}, and verified that it is the same submatrix in characteristics $0$ and $2$, we need only check the entries $d^{e,p}_{\la^{(i)} \la^{(0)}}(v)$ for $i=1,2,3,4$.

One may check using the LLT algorithm that the partial decomposition matrix is
\begin{equation*}
\begin{pmatrix}
1&\cdot&\cdot&\cdot&\cdot\\
v&1&\cdot&\cdot&\cdot\\
v^2&v&1&\cdot&\cdot\\
\cdot&v&\cdot&1&\cdot\\
\cdot&v^2&v&v&1
\end{pmatrix}.
\end{equation*}

Then we have
\[
d^{e,p}_{\la^{(i)}\la^{(0)}}(v) = d^{e,0}_{\la^{(i)}\la^{(0)}}(v) + \sum_{\nu \triangleleft \la^{(0)}} d^{e,0}_{\la^{(i)}\nu}(v)a_{\nu\la^{(0)}}(v),\label{eqn:adj} 
\]
where $A =(a_{\la\mu}(v))$ is the \emph{graded} adjustment matrix.
If $\nu,\mu \in B([1,4,7],4)$ have respective $3$-quotients $(\nu^{(0)},\nu^{(1)},\nu^{(2)})$ and $(\mu^{(0)},\mu^{(1)},\mu^{(2)})$, then we have  $a_{\nu\mu}(v) = 0$ unless $|\nu^{(i)}|=|\mu^{(i)}|$ for $i=0,1,2$, by \cite[Proposition~4.4]{jlm}.
Then $d^{3,2}_{\la^{(i)}\la^{(0)}}(v) = d^{3,0}_{\la^{(i)}\la^{(0)}}(v) + d^{3,0}_{\la^{(i)}\nu}(v)a_{\nu\la^{(0)}}(v)$ for $\nu = ((1^3),(1),\vn)$, and the partial decomposition matrices agree if $a_{\nu\la^{(0)}}(v)=0$.
One may use the Jantzen sum formula to compute that $d^{3,2}_{\nu\la^{(0)}}(v)=0$, and therefore that $a_{\nu\la^{(0)}}(v)=0$.

Next, suppose that $p=3$.
Since $e=p=3$, $B$ is isomorphic to the weight $4$ Rouquier block for the symmetric group in characteristic $3$, and we may apply the stronger \cite[Corollary~4.5]{jlm} to compute the adjustment matrix precisely.
Then one may compute that the corresponding partial decomposition matrix here is indeed (\ref{targetmatrixstar}) in both characteristic $0$ and $3$.
In fact, the only nonzero adjustment matrix entries of the form $a_{\nu\la^{(i)}}(v)$ are $a_{\nu\la^{(1)}}(v) = a_{\nu' \la^{(3)}}(v) = 1$ for $\nu = ((1^4),\vn^2)$ and $\nu' = ((1^3),(1),\vn)$, but $d^{e,0}_{\la^{(i)}\nu}(v) = d^{e,0}_{\la^{(i)}\nu'}(v) = 0$ for all $i$, so that the submatrix is indeed the same in characteristics $0$ and $3$, and the proof is complete.
\end{proof}

As our strategy is similar to that in \cite{als23}, the following lemma will allow us to prove that the remaining weight $4$ blocks are strictly wild.

\begin{lemc}{als23}{Lemma~6.6}\label{lem:rest}
Let $\la,\mu \in B(\rho,w)$ be such that $\mu$ is $e$-regular and both $\la$ and $\mu$ have exactly $k$ removable $i$-nodes.
Let $\bar{\la}$ and $\bar{\mu}$ be the partitions obtained by removing $k$ $i$-nodes from $\la$ and $\mu$ respectively.
Suppose that $\bar{\la}$ and $\bar{\mu}$ both have exactly $k$ addable $i$-nodes.
Then $\bar{\mu}$ is $e$-regular and $d^{e,p}_{\la\mu}(1) \geq d^{e,p}_{\bar{\la}\bar{\mu}}(1)$.
\end{lemc}

\begin{prop}\label{prop:NearRouqblocks}
Suppose that $s \in \{[1,4,6], [1,3,6], [1,3,5]\}$.
We define the following partitions in $B(s,4)$ by their $3$-quotients.
If $p\neq 3$, then we set 
\begin{alignat*}{3}
\la^{(0)} &= ((2,1),(1),\vn), \qquad &\la^{(1)} &= ((1^{2}),(1^2),\vn),\qquad 
&\la^{(2)} &= ((1),(2,1),\vn),\\
\la^{(3)} &= ((1),(1^3),\vn), &
\la^{(4)} &= (\vn,(2,1^2),\vn).
\end{alignat*}
Then the partial decomposition matrix corresponding to these five partitions is identical in characteristic $0$ and characteristic $p$ and is of the form~(\ref{targetmatrix}), and hence $B$ is strictly wild.

If $p=3$, then we set
\begin{alignat*}{3}
\la^{(1)} &= ((2^2),\vn^2), \qquad &\la^{(2)} &= ((2,1^2),\vn^2),\qquad 
&\la^{(3)} &= ((2,1),(1),\vn),\\
\la^{(4)} &= ((2),(2),\vn), &
\la^{(5)} &= ((2),(1^2),\vn), &\la^{(6)}&=((1),(1^3),\vn).
\end{alignat*}
Then the partial decomposition matrix corresponding to these six partitions in both characteristic $0$ and characteristic $3$ is equal to (\ref{targetmatrixstar})
and hence $B(s,4)$ is strictly wild.
\end{prop}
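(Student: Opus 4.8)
The plan is to follow the template of \cref{prop:Rouqblocks}, transferring the characteristic-freeness established there for the genuine Rouquier block $B([1,4,7],4)$ to each of the three near-Rouquier blocks $B(s,4)$. The first thing to note is that the partitions listed for $B(s,4)$ carry exactly the same $3$-quotients as those used for the Rouquier block in \cref{prop:Rouqblocks}, so there is a natural label-preserving correspondence between the two sets of partitions. This is consistent with all four cores lying in the same regime $p_{e-1} - p_{e-2} > 2e$, $p_{e-2} - p_{e-3} > e$, differing from $[1,4,7]$ only by bead moves that fall below the Scopes threshold for weight $4$ (so that none of the three blocks is merely Scopes equivalent to a smaller one).

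By \cref{prop:matrixtrick}, it suffices to establish that every entry of these submatrices is characteristic-free, i.e.\ $d^{3,p}_{\bar\la\bar\mu}(1) = d^{3,0}_{\bar\la\bar\mu}(1)$ for all relevant $\bar\la, \bar\mu \in B(s,4)$. I would first compute, by the LLT algorithm, the full characteristic-$0$ matrix in each block and check that it coincides entrywise with that of the Rouquier block from \cref{prop:Rouqblocks}; in particular it is of the form~(\ref{targetmatrix}) when $p \neq 3$ and equals~(\ref{targetmatrixstar}) when $p = 3$. Since the graded adjustment matrix has nonnegative coefficients, the inequality $d^{3,p}_{\bar\la\bar\mu}(1) \geq d^{3,0}_{\bar\la\bar\mu}(1)$ holds automatically. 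For the reverse inequality, let $\la, \mu \in B([1,4,7],4)$ be the partitions with the same $3$-quotients as $\bar\la, \bar\mu$, and apply \cref{lem:rest} (iterated through intermediate blocks as needed) to descend from the Rouquier block to $B(s,4)$ by node removal, obtaining $d^{3,p}_{\la\mu}(1) \geq d^{3,p}_{\bar\la\bar\mu}(1)$. Combining the characteristic-freeness of the Rouquier block from \cref{prop:Rouqblocks} with the entrywise coincidence of the characteristic-$0$ matrices then yields $d^{3,0}_{\bar\la\bar\mu}(1) \leq d^{3,p}_{\bar\la\bar\mu}(1) \leq d^{3,p}_{\la\mu}(1) = d^{3,0}_{\la\mu}(1) = d^{3,0}_{\bar\la\bar\mu}(1)$, so equality holds throughout, the submatrix is characteristic-free, and \cref{prop:matrixtrick} applies.

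The main obstacle will be the combinatorial verification of the hypotheses of \cref{lem:rest} at each descending step: for the relevant residue $i$ and multiplicity $k$, one must check that the two partitions being compared each have exactly $k$ removable $i$-nodes and that the two partitions obtained after removal each have exactly $k$ addable $i$-nodes, so that the step realises precisely the $3$-quotient-preserving correspondence. This check must be carried out uniformly across the three cores $[1,4,6]$, $[1,3,6]$, $[1,3,5]$ -- whose distances from $[1,4,7]$, and hence the required chains of intermediate blocks, differ -- and in both characteristic regimes, since the $p \neq 3$ family uses $\la^{(0)}, \dots, \la^{(4)}$ while the $p = 3$ family uses $\la^{(1)}, \dots, \la^{(6)}$. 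Once the node counts are confirmed the inequalities chain together mechanically; moreover, for $p \geq 5$ the conclusion is immediate since the weight-$4$ adjustment matrix is then trivial, so in practice only $p = 2$ and $p = 3$ require the full squeeze.
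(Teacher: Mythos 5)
Your proposal is correct and follows essentially the same route as the paper: characteristic-$0$ matrices via LLT giving lower bounds (nonnegativity of the adjustment matrix), upper bounds obtained by chaining \cref{lem:rest} from the Rouquier block $[1,4,7]$ through intermediate Scopes classes down to each of $[1,4,6]$, $[1,3,6]$, $[1,3,5]$, and then squeezing against the characteristic-freeness established in \cref{prop:Rouqblocks}. The combinatorial verification of the removable/addable node hypotheses that you flag as the main obstacle is exactly the detail the paper also leaves to the reader.
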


\begin{proof}
As in \cite[Proposition~6.7]{als23}, we prove the stronger claim that the proposition holds for all $s \in \{[1,4,7], [1,7,4],$ $[1,4,6], [4,1,7], [1,3,6], [1,6,3], [1,3,5]\}$ by noting that the LLT algorithm gives us the matrices if $p=0$, and give lower bounds on the entries for $p=2$ and $3$.
The case $s=[1,4,7]$ is proved in \cref{prop:Rouqblocks}, and by applying \cref{lem:rest} we obtain upper bounds on the entries for $[1,7,4]$, coming from those known decomposition numbers for the Rouquier block $[1,4,7]$.
These agree with the lower bounds, yielding the result.
We may then apply the same trick again to get from $[1,7,4]$ to $[7,1,4]=[1,4,6]$.
By an identical argument, we can pass from $[1,4,7]$ to $[4,1,7]$, then to $[4,7,1]=[1,3,6]$, then to $[7,4,1]=[1,6,3]$, and then finally to $[6,1,3]=[1,3,5]$.
The key is that our partitions have been carefully chosen so that we may apply \cref{lem:rest} -- the reader is invited to check these details.
\end{proof}

\begin{rem}
By \cref{thm:mostblocks,prop:Rouqblocks,prop:NearRouqblocks}, all weight 4 blocks are strictly wild.
\end{rem}

We now complete the proof of \cref{thm:MAIN} by handling the remaining blocks of weight at least $5$.

\begin{thm}\label{thm:wlargemain}
Suppose $e=3$, $p_{e-1}-p_{e-2}>2e$, $p_{e-2}-p_{e-3}>e$, $w \geq 5$, and let $s$ be a Scopes class for $w$.
Then the block $B(s,w)$ of $\hhh$ is strictly wild.
\end{thm}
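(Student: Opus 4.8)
The goal is to prove that for $e=3$, the weight $w\geq 5$ blocks $B(s,w)$ with $p_{e-1}-p_{e-2}>2e$ and $p_{e-2}-p_{e-3}>e$ are strictly wild. Since all weight $4$ blocks (including the troublesome Rouquier-type ones) have already been shown strictly wild via Propositions \ref{prop:Rouqblocks} and \ref{prop:NearRouqblocks}, the natural strategy is an induction on weight that reduces weight $w$ down to weight $w-1$ (or directly to weight $4$).

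Let me think about what tools are available...

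**The reduction mechanism.**

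The key reduction tool throughout the weight $\geq 4$ section is row-removal (Theorem \ref{thm:rowrem}), combined with the trick of "sliding the lowest bead down $w-2$ spaces" used in Theorem \ref{thm:mostblocks}. For these particular $e=3$ blocks, the row-removal reduces weight $w$ computations to weight $2$ when possible — but the obstruction case (Rouquier-type) required weight $4$ partitions. So for $w\geq 5$, the plan is to reduce to the weight $4$ case handled in Propositions \ref{prop:Rouqblocks} and \ref{prop:NearRouqblocks}.

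Let me also recall Lemma \ref{lem:rest} — this gives inequalities between decomposition numbers via node-removal, and was the engine behind Proposition \ref{prop:NearRouqblocks} for moving between Scopes classes. For the induction on $w$, I expect the analogous device: choosing partitions in $B(s,w)$ so that removing an appropriate node/set of nodes lands us in a weight $4$ block whose matrix we already know, with the lower bounds from the LLT algorithm (char $0$) matching the upper bounds from Lemma \ref{lem:rest}.
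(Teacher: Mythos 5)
You have correctly identified the paper's main tool -- row removal (\cref{thm:rowrem}) applied to partitions sharing a common first row, so as to land in a lower-weight block already known to be strictly wild -- but the proposal stops at the level of a plan and omits essentially all of the content where the work actually lies. The paper's proof is not an induction passing from $w$ to $w-1$: it writes the Scopes class as $[1,s_1,s_2]$, uses conjugation to normalise ($[1,s_1,s_2]'=[1,s_2-s_1+1,s_2]$, etc.), and then in each of several cases on $s_2-s_1$ (resp.\ $s_1-s_2$) chooses five or six explicit partitions whose $3$-quotients all begin with the same first row of size $w-3$ or $w-4$; a \emph{single} application of \cref{thm:rowrem} then drops directly into a specific weight $3$ or weight $4$ block. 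In particular, two of the cases ($s_2-s_1=2$ and $s_1-s_2=3$) reduce to the weight $3$ block $B([1,3,4],3)$ treated in \cref{subsec:wt3fifthcase}, not to a weight $4$ block, so "reduce to the weight $4$ case handled in \cref{prop:Rouqblocks,prop:NearRouqblocks}" does not cover all cases. Without the explicit partitions and the case analysis identifying which target block one lands in (and checking Scopes equivalence of that target to $B([1,3,4],3)$, $B([1,4,6],4)$, or $B([1,4,7],4)$), the argument cannot be completed.

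A second, smaller point: you anticipate needing \cref{lem:rest} together with LLT lower bounds to pin down the positive-characteristic entries. That device is needed in \cref{prop:NearRouqblocks} to move between weight $4$ Scopes classes, but it is not needed in the proof of this theorem: \cref{thm:rowrem} gives exact equalities $d_{\la\mu}^{e,0}(v)=d_{\bar\la\bar\mu}^{e,0}(v)$ and $d_{\la\mu}^{e,p}(1)=d_{\bar\la\bar\mu}^{e,p}(1)$, so once the first rows agree the submatrix is inherited verbatim from the already-verified lower-weight block, and no bounding argument is required.
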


\begin{proof}
We may write $s=[1,s_1,s_2]$.
First, suppose the Scopes class is $[1,s_1,s_2]$, with $s_1 \leq s_2$.
Since $[1,s_1,s_2]' = [1,s_2-s_1+1,s_2]$, and $B(s,w)$ and $B(s',w)$ are Morita equivalent, it suffices to consider the cases where $s_2-s_1 \leq s_1-1$, so we assume further that this inequality holds.
Since $p_{e-1}-p_{e-2} > 2e$, we may also assume that $s_2-s_1 \geq 2$.

If $s_2-s_1 = 2$, then $s_1 \geq 3$.
We set
\[
\la^{(0)} = ((w-3,2,1),\vn^2), \qquad \la^{(1)} = ((w-3,2),(1),\vn),
\]
and if $p\neq 2$ further set
\[
\la^{(2)} = ((w-3,1^2),(1),\vn), \qquad
\la^{(3)} = ((w-3,1^3),\vn^2), \qquad
\la^{(4)} = ((w-3,1),(1^2),\vn),
\]
while if $p=2$ we instead set
\[
\la^{(2)} = ((w-3),(3),\vn), \qquad
\la^{(3)} = ((w-3,1),(2),\vn), \qquad
\la^{(4)} = ((w-3,1^2),(1),\vn).
\]
Then by removing the first row of each partition, we reduce the computation of decomposition numbers to the case $B([1,s_1,s_1+1],3)$, which is Scopes equivalent to the block $B([1,3,4],3)$.
The resulting matrix is of the form (\ref{targetmatrixalt}) if $p\neq2$, or (\ref{targetmatrixaltsquare2}) if $p=2$, as shown in \cref{subsec:wt3fifthcase}(ii), so the result follows by \cref{prop:matrixtrick}.

Next, we suppose that $s_2-s_1=3$, so that we also have $s_1 \geq 4$.
If $p\neq 3$, we set 
\begin{alignat*}{3}
\la^{(0)} &= ((w-4,2,1),(1),\vn), \qquad &\la^{(1)} &= ((w-4,1^{2}),(1^2),\vn),\qquad 
&\la^{(2)} &= ((w-4,1),(2,1),\vn),\\
\la^{(3)} &= ((w-4,1),(1^3),\vn), &
\la^{(4)} &= ((w-4),(2,1^2),\vn).
\end{alignat*}
If $p=3$, then we set
\begin{alignat*}{3}
\la^{(1)} &= ((w-4,2^2),\vn^2), \qquad &\la^{(2)} &= ((w-4,2,1^2),\vn^2),\qquad 
&\la^{(3)} &= ((w-4,2,1),(1),\vn),\\
\la^{(4)} &= ((w-4,2),(2),\vn), &
\la^{(5)} &= ((w-4,2),(1^2),\vn), &\la^{(6)}&=((w-4,1),(1^3),\vn).
\end{alignat*}
In either case, we may remove the first row of each partition and reduce the computation of decomposition numbers to the case $B([1,s_1,s_1+2], 4)$, which is Scopes equivalent to the block $B([1,4,6],4)$, handled in \cref{prop:NearRouqblocks}.
The result follows immediately, since the remaining partitions are precisely those used in that proposition.

Next, we suppose that $s_2-s_1\geq 4$, so that we also have $s_1 \geq 5$.
We take the same partitions as in the previous case, and removing the first row now yields partitions in the block $B([1,s_1,s_2-1], 4)$, which is Scopes equivalent to $B([1,4,7], 4)$, and the result follows as before.

It remains to check the blocks for which $s_2<s_1$, which we now check.
Since $[1,s_1,s_2]' = [1,s_1,s_1-s_2]$, we may assume that $s_2 \geq s_1-s_2$.
Since $p_{e-1}-p_{e-2}>2e$, we may also assume that $s_1-s_2 \geq 3$.

If $s_1-s_2 = 3$, then $s_2 \geq 3$.
We define partitions $\la^{(0)},\la^{(1)}, \la^{(2)}, \la^{(3)}, \la^{(4)}$ as in the $s_1\leq s_2$ case we considered above with $s_2-s_1=2$.
Removing the first row yields partitions in the block $B([1,s_1-1,s_2], 3)$, which is Scopes equivalent to $B([1,3,4], 3)$, which is why the chosen partitions work again.

Finally, we suppose that $s_1-s_2\geq4$ so that $s_2 \geq 4$.
Then we define partitions as in the $s_1\leq s_2$ cases where $s_2-s_1 \geq 3$ above, and removing the first row yields partitions in the block $B([1,s_1-1,s_2], 4)$, which is Scopes equivalent to $B([1, 4, 6], 4)$ if $s_1 - s_2 = 4$, or $B([1,4,7], 4)$ if $s_1 - s_2 \geq 5$, and the result follows.
\end{proof}

Finally, we use \cref{thm:MAIN} to prove \cref{cor:schuralg}.

\begin{proof}[Proof of \cref{cor:schuralg}]
If $B$ is not Scopes equivalent to the weight $2$ Rouquier block with $e=3$, then the result follows since the corresponding block of $\hhh$ is strictly wild, and is an idempotent truncation of $B$.
To see this, take the usual Schur functor $F: B{-}\mathrm{mod} \rightarrow eBe{-}\mathrm{mod}$ given by $M\mapsto eM$, and the functor $G:eBe{-}\mathrm{mod} \rightarrow B{-}\mathrm{mod}$ given by $N \mapsto Be \otimes_{eBe} N$.
Then $FG(N) \cong N$, and one may use this to show that $G$ is a full representation embedding.

Now suppose that $B$ is the weight $2$ Rouquier block with $e=3$, i.e.~the block of $S_q(11,11)$ with core $(3,1^2)$.
Then we may check that the submatrix indexed by partitions $(6,4,1)$, $(6,3,2)$, $(6,1^5)$, $(3^2,2^2,1)$, and $(3^2,2,1^3)$ is of the form~(\ref{targetmatrix}) and is characteristic-free.
Indeed, for $p\neq2$ this follows from \cite[Theorem~3.5 and Corollary~3.6]{schrolltan07}.
By \cref{thm:wt2adjust}, we may compute the graded decomposition numbers for the corresponding block of $\hhh[11]$, and therefore all $d_{\la\mu}^{e,2}(v)$ for $\la,\mu \in B$ with $\mu$ $3$-regular.
Since $(6,1^5)$ and $(3^2,2,1^3)$ are $3$-singular, a separate check is required.
We may use the Jantzen sum formula to confirm that the necessary decomposition numbers remain the same in characteristics $0$ and $2$.
The proof of \cref{prop:matrixtrick} easily applies to $S_q(n,n)$, and the result follows.
\begin{answer}
We give the submatrix explicitly below.
\[
\begin{array}{r|ccccc}
&\rt{6,4,1}
&\rt{6,3,2}
&\rt{6,1^5}
&\rt{3^2,2^2,1}
&\rt{3^2,2,1^3}
\\\hline
6,4,1&1&\cdot&\cdot&\cdot&\cdot\\
6,3,2&v&1&\cdot&\cdot&\cdot\\
6,1^5&\cdot&v&1&\cdot&\cdot\\
3^2,2^2,1&\cdot&v&\cdot&1&\cdot\\
3^2,2,1^3&\cdot&v^2&v&v&1
\end{array}\qedhere
\]
\end{answer}
\end{proof}

\bibliographystyle{lspaper}
\addcontentsline{toc}{section}{\refname}
\bibliography{master}

\end{document}